\numberwithin{equation}{section}
\newtheorem{Theorem}{Theorem}[section]
\newtheorem{Lemma}[Theorem]{Lemma}
\newtheorem{Proposition}[Theorem]{Proposition}
 { \theoremstyle{definition}
\newtheorem{Definition}[Theorem]{Definition}
 }
\newcommand{\half}{\ensuremath{\frac{1}{2}}}
\newcommand{\N}{{\mathbb N}}
\newcommand{\R}{{\mathbb R}}
\newcommand{\cS}{{\mathcal S}}
\newcommand{\va}{{\boldsymbol{a}}}
\newcommand{\vb}{{\boldsymbol{b}}}
\newcommand{\vy}{{\boldsymbol{y}}}
\newcommand{\vzero}{{\boldsymbol{0}}}
\newcommand{\Sdmone}{{{\mathbb S}^{d-1}}}
\newcommand{\bbS}{{\mathbb S}}
\newcommand{\bbN}{{\mathbb N}}
\newcommand{\HypFunTO}[4]{{
 {_2F_1}\left[ \left.
 \begin{matrix}
 #1 , #2\\
 #3 \\
 \end{matrix} \right| #4 \right] }} 
\def\CD{{\mathcal D}}
\def\CF{{\mathcal F}}
\def\CH{{\mathcal H}}
\def\CX{{\mathcal X}}
\def\NN{{\mathbb N}}
\def\RR{{\mathbb R}}
\def\SS{{\mathbb S}}
\def\Li{\operatorname{Li}}
\begin{document}
\allowdisplaybreaks

\newcommand{\arXivNumber}{1801.01313}

\renewcommand{\PaperNumber}{083}

\FirstPageHeading

\ShortArticleName{Thinplate Splines on the Sphere}

\ArticleName{Thinplate Splines on the Sphere}

\Author{Rick K.~BEATSON~$^\dag$ and Wolfgang ZU~CASTELL~$^{\ddag\S}$}

\AuthorNameForHeading{R.K.~Beatson and W.~zu~Castell}

\Address{$^\dag$~School of Mathematics and Statistics, University of Canterbury,\\
\hphantom{$^\dag$}~Private Bag 4800, Christchurch, New Zealand}
\EmailD{\href{mailto:r.beatson@math.canterbury.ac.nz}{r.beatson@math.canterbury.ac.nz}}

\Address{$^\ddag$~Scientific Computing Research Unit, Helmholtz Zentrum M\"{u}nchen,\\
\hphantom{$^\ddag$}~Ingolst\"{a}dter Landstra{\ss}e~1, 85764 Neuherberg, Germany}
\EmailD{\href{mailto:castell@helmholtz-muenchen.de}{castell@helmholtz-muenchen.de}}

\Address{$^\S$~Department of Mathematics, Technische Universit\"{a}t M\"{u}nchen, Germany}

\ArticleDates{Received January 08, 2018, in final form July 30, 2018; Published online August 12, 2018}

\Abstract{In this paper we give explicit closed forms for the semi-reproducing kernels associated with thinplate spline interpolation on the sphere. Polyharmonic or thinplate splines for ${\mathbb R}^d$ were introduced by Duchon and have become a widely used tool in myriad applications. The analogues for ${\mathbb S}^{d-1}$ are the thin plate splines for the sphere. The topic was first discussed by Wahba in the early 1980's, for the ${\mathbb S}^2$ case. Wahba presented the associated semi-reproducing kernels as infinite series. These semi-reproducing kernels play a central role in expressions for the solution of the associated spline interpolation and smoothing problems. The main aims of the current paper are to give a recurrence for the semi-reproducing kernels, and also to use the recurrence to obtain explicit closed form expressions for many of these kernels. The closed form expressions will in many cases be significantly faster to evaluate than the series expansions. This will enhance the practicality of using these thinplate splines for the sphere in computations.}

\Keywords{positive definite functions; zonal functions; thinplate splines; ultraspherical expansions; Gegenbauer polynomials}

\Classification{42A82; 33C45; 42C10; 62M30}

\section{Introduction}

In this paper we give explicit closed forms for the semi-reproducing kernels associated with thinplate spline interpolation on the sphere. Polyharmonic or thinplate splines for $\RR^d$ were introduced by Duchon in his classic papers~\cite{Duchon_76,Duchon_77} and have become a widely used tool in myriad applications. The analogues for $\Sdmone \subset \R^d$ are the thinplate splines for the sphere. The topic was first discussed by Wahba~\cite{Wahba81,Wahba82} in the early 1980's, for the $\bbS^2$ case. Wahba presented the associated semi-reproducing kernels as infinite series. These semi-reproducing kernels play a~central role in expressions for the solution of the associated spline interpolation and smoothing problems.

The main aims of the current paper are to give a recurrence for these semi-reproducing kernels, and also to use the recurrence to obtain explicit closed form expressions. Here we are building on previous work of Martinez-Morales~\cite{Ma05}. Unfortunately, there are errors in the theory presented in~\cite{Ma05} and consequently many of the expressions given there for the kernels are incorrect. The closed form expressions given here will usually be significantly faster to evaluate than the series expansions. This will enhance the practicality of using the thinplate splines for the sphere in computations.

The paper is laid out as follows. Section~\ref{sec:reproducing_kernels} discusses the central role played by semi-re\-pro\-ducing kernels in the solution of both interpolation and penalized least squares fitting problems. Section~\ref{sec:Thinplate_splines_on_the_sphere} develops semi-reproducing kernels associated with the thinplate splines on the sphere, that is, semi-reproducing kernels associated with minimum energy interpolation and penalized least squares fitting problems with a particular choice of energy. The energy chosen being that naturally associated with iterated Laplace--Beltrami operators. These semi-reproducing kernels are given in this section as infinite series. Section~\ref{sec:Thinplate_splines_on_the_sphere} also recalls known results concerning Fourier--Gegenbauer expansions that will be needed later. Section~\ref{sec:the_operators_T_and_T*} motivates the construction of an operator $T$ and its adjoint $T^*$. It also presents some fundamental properties of these operators. These operators were initially developed in Martinez-Morales~\cite{Ma05}. Section~\ref{sec:recurrence} gives a~recurrence for the various thinplate spline kernels $K_{d,m}(x,y)$, where $d$ indicates the dimension and $m$ is the power of the associated differential operator. More precisely, it gives a recurrence for the related functions $k_{d,m}\colon [-1,1]\rightarrow \R$ where $k_{d,m}(\langle x, y \rangle)= K_{d,m}(x,y)$. Sections~\ref{sec:explicit_forms} and \ref{sec:m_equal_1} give short closed form expressions for many of the functions $k_{d,m}$.

For ease of access to the relevant background we will base our notation on that used in Dai and Xu~\cite{DaiXu2013}. Occasionally, when the value for the dimension $d$ is particularly important, we will supplement the symbol they use with a $d$.

\section{Reproducing kernels and approximation} \label{sec:reproducing_kernels}

Let us begin with summarising the main ideas of reproducing kernels in indefinite inner product spaces and their role in both interpolation and penalized least squares fitting problems. These results show the central role of reproducing kernels in the solution of these problems. We focus on the specific case of reproducing kernels\footnote{Note that reproducing kernels for semi-Hilbert spaces also appear as \emph{semi-kernels} \cite{BerlinetAgnan,Laurent1991} or \emph{increment reproducing kernels} \cite{MosamamKent2010}.} for semi-Hilbert spaces, or simply semi-reproducing kernels, as this is the appropriate framework for thinplate spline approximation. For further definitions and basic properties of semi-reproducing kernels we refer to \cite{BerlinetAgnan, BezaevValisenko, CheneyLight_2000, MosamamKent2010}. Our treatment of the relevant interpolation and penalized least squares problems is based on that of Strauss~\cite{Strauss_2002}.

Let $\CD$ be a subset of $\RR^d$. Consider approximation from a vector space over the reals $\CH \subset C(\CD)$. Assume the space $\CH$ is endowed with a semi-inner product $( \cdot, \cdot)$, that is, the inner product is lacking definiteness. Thus, there are non-zero vectors $f\in\CH$ with $(f,f)=0$. Further assume that the kernel $\CH_0$ of the semi-inner product $(\cdot,\cdot)$ is finite-dimensional, i.e., $\dim\CH_0=m<\infty$, and $(f,f)=0$ if and only if $f\in \CH_0$.

A standard approach to deal with semi-inner products is to supplement the semi-inner product with an inner product on $\CH_0$ thereby obtaining a~definite inner product on the space $\CH$ (see~\cite{Bognar1974}). Towards this aim, we need to decompose the space $\CH$ into a direct sum $\CH_0\oplus\CH_1$, such that the given semi-inner product provides a definite inner product on the subspace $\CH_1$. Given $m$ linearly independent functionals spanning the dual of $\CH_0$, we define $\CH_1$ as the space of functions in $\CH$ which are mapped onto zero by all these functionals.

Let us recall this approach using point evaluations. Nevertheless, it is important to note that there are many choices for such a set of functionals. Clearly, the decomposition obtained for the space $\CH$ depends upon the choice made for the $m$ functionals.

\begin{Definition} A set of distinct points $\CX $ is said to be unisolvent for $\CH_0$ if the only function in~$\CH_0$ which is zero at all points of $\CX$ is the zero function.
\end{Definition}
Given a unisolvent set $\CX=\{z_1, \ldots,x_m\}$ for $\CH_0$, where $m=\dim (\CH_0)$, the set of point evaluation functionals $\{\delta_{x}\colon x\in\CX \}$ is linearly independent on~$\CH_0$. Hence, we can find a~Lagrange basis $u_1,\dots, u_m$ of $\CH_0$ with respect to $\CX$, i.e., $u_i(x_j)=\delta_{ij}$, $1\leq i,j \leq m$. Then
\begin{gather*}
[f,g]_0 = \sum_{j=1}^m f(x_j)g(x_j), \qquad f,g\in\CH_0,
\end{gather*}
defines an inner product on $\CH_0$. The basis $u_1,\dots, u_m$ is orthogonal with respect to the inner product
$[\cdot,\cdot]_0$. Furthermore, the mapping
\begin{gather*}
P_0\colon \ \CH \to \CH_0, \qquad f\mapsto f_0=\sum_{j=1}^m f(x_j) u_j
\end{gather*}
is a projection of $\CH$ onto $\CH_0$. The definition of inner product for the full space $\CH$ which follows will make $P_0$ the orthogonal projection onto $\CH_0$. The subspace $\CH_1$ can then be defined via the projector $P_1=I-P_0$, i.e.,
\begin{gather*}
\CH_1 = \{ f\in\CH \colon f(x)=0 \ \text{for all} \ x\in\CX\}.
\end{gather*}
Since $\CH_0$ is the kernel of the semi-inner product $(\cdot,\cdot)$, i.e., $(f,f)=0$ iff $f\in\CH_0$, the semi-inner product is definite on $\CH_1$ via construction. Therefore,
\begin{gather*}
[f,g] = [P_0f, P_0 g]_0 + (P_1f, P_1g), \qquad f,g \in \CH,
\end{gather*}
defines a definite inner product on $\CH$ (see \cite{Bognar1974} for further details).

We are interested in Hilbert spaces carrying the special property of being reproducing kernel spaces. There is a one-to-one correspondence between reproducing kernel Hilbert spaces and positive definite kernels. A~similar relation holds true for semi-reproducing kernel Hilbert spaces.

\begin{Definition}Given $n\in\NN$, a pair $(\CX,\va)$ with $\CX=\{x_1,\dots,x_n\}$ a set of distinct points from~$\CD$ and $\va=(a_1,\dots,a_n)^{\rm T}\in\RR^n$ is called an $\CH_0$-increment if
\begin{gather*}
\sum_{j=1}^n a_j f(x_j) = 0 \qquad\text{for all} \ f\in\CH_0.
\end{gather*}
The set of all $\CH_0$-increments is denoted by $\CH_0^\perp$.
\end{Definition}
Note that an $\CH_0$-increment can naturally be identified with a linear functional
\begin{gather*}
\lambda_{\CX,\va}(f) = \sum_{j=1}^n a_j \delta_{x_j}(f), \qquad f\in \CH,
\end{gather*}
vanishing on $\CH_0$.

The structure of semi-reproducing kernels already shows that we can expect the reproducing kernel for a semi-Hilbert space to be positive definite only on a suitable subspace.

\begin{Definition} Let $\CD \subset \RR^d$, $\CH_0$ be a finite dimensional subset of $C(\CD)$ and $K\colon \CD \times \CD \rightarrow \RR$ be a symmetric function. $K$ is called conditionally positive definite with respect to $\CH_0$ if for all $n\in\NN$
\begin{gather}\label{eq:cpd_of_P_wrt_U}
\sum_{i=1}^n\sum_{j=1}^n a_i a_j K(x_i,x_j) \geq 0,
\end{gather}
for all $\CH_0$-increments $(\CX,\va)$. $K$ is said to be strictly conditionally positive definite with respect to $\CH_0$ if the inequality in~\eqref{eq:cpd_of_P_wrt_U} is strict whenever in addition $\va$ is nonzero.
\end{Definition}
There is a correspondence between conditionally positive definite kernels and semi-repro\-du\-cing spaces (see \cite{Atteia,Meinguet,MosamamKent2010}). The associated reproducing property can again be stated in terms of $\CH_0$-increments.

\begin{Definition}\label{def:semi-rep-kernel}Let $\CH\subset C(\CD)$ be a semi-Hilbert space, i.e., $\CH$ is a semi-inner product space with semi-inner product $(\cdot, \cdot)$ the kernel $\CH_0$ of which is finite-dimensional, and $\CH$ is complete with respect to the induced semi-norm. A symmetric kernel $K\colon \CD \times \CD \rightarrow \RR$ is called a semi-reproducing kernel for $\CH$ if $(\cdot, \cdot)$ reproduces $\CH_0$-increments, i.e., for all $\lambda_{\CX,\va}$ annihilating $\CH_0$ the following two properties hold
 \begin{gather}
 \sum_{j=1}^n a_j K(\cdot, x_j) \in \CH, \label{eq:first_prop_semi_repro_kernel}
 \end{gather}
 and
 \begin{gather}
 \left( f, \sum_{i=1}^n a_i K(\cdot, x_i) \right) = \sum_{j=1}^n a_j f(x_j) \qquad \text{for all} \ f \in \CH. \label{eq:second_prop_semi_repro_kernel}
 \end{gather}
 \end{Definition}
Given $m=\dim\CH_0$ linearly independent functionals $\lambda_1, \dots, \lambda_m$ on $\CH_0$ and the corresponding Lagrange basis
\begin{gather*}
u_1,\dots, u_m \qquad\mbox{ such that }\quad \lambda_j(u_i)=\delta_{ji}, \quad 1\leq i,j\leq m,
\end{gather*}
the kernel
\begin{gather*}
K_0(x,y) = \sum_{j=1}^m u_j(x)u_j(y), \qquad x,y\in \CD,
\end{gather*}
obviously provides a reproducing kernel for $\CH_0$ with respect to the inner product
\begin{gather*}
[f,g]_0 = \sum_{j=1}^m \lambda_j(f)\lambda_j(g), \qquad f,g\in \CH_0.
\end{gather*}
Using the orthogonal projection
\begin{gather*}
P_0f = \sum_{j=1}^m \lambda_j(f) u_j, \qquad f\in\CH,
\end{gather*}
we can again define $\CH_1=(I-P_0)\CH$. By construction,
\begin{gather*}
\CH_1 = \{ f\in\CH\colon \lambda_j(f)=0,\, 1\leq j\leq m \}.
\end{gather*}

Furthermore, if $K$ is a semi-reproducing kernel of $\CH$ the kernel
\begin{gather*}
K_1(x,y) = K(x,y) - \sum_{j=1}^m u_j(y)\lambda_j\big(K(x,\cdot)\big) - \sum_{j=1}^m u_j(x) \lambda_j(K\big(\cdot,y)\big) \\
\hphantom{K_1(x,y) =}{} + \sum_{i=1}^m\sum_{j=1}^m u_i(x)u_j(y) \lambda_i^1\lambda_j^2\big(K(\cdot,\cdot)\big),
\end{gather*}
is the reproducing kernel of $\CH_1$. Here, the superindex in the last term indicates the functional operating on the first and second variable, respectively, Thus, the space $\CH$ is a reproducing kernel Hilbert space itself with reproducing kernel
\begin{gather*}
K_\CH(x,y) = K_1(x,y) + K_0(x,y), \qquad x,y\in \CD.
\end{gather*}
See \cite{CheneyLight_2000, MosamamKent2010} for details. Note that if $K$ is given such that
$P_0K(\cdot,x)=0$ for all $x\in\CD$, then the projection in the above expression vanishes, i.e. $K_1=K$.

The framework of reproducing kernel spaces allows us to consider regularized interpolation problems in broad mathematical generalities (see \cite[Chapter~2.1]{BerlinetAgnan}). The rather beautiful result of Strauss~\cite{Strauss_2002} concerning mixed interpolation and regularized least squares problems provides a special case of \cite[Theorem~59]{BerlinetAgnan}.

The following notation will be used. For a set of points $X=\{x_1,\dots,x_n\}\subset\CD$ and a~func\-tion~$f$ on~$\CD$ we write $f_X$ for the vector $(f(x_1),\dots,f(x_n))^{\rm T}$, $K_X$ for the $n\times n$ matrix with $ij$-entry $K(x_i,x_j)$, and~$C_X$ for the $m\times n$ matrix $\big(u_i(x_j)\big)$, where $u_1,\dots,u_m$ is a basis of $\CH_0$. Furthermore, $W^\dagger$~denotes the pseudo-inverse of the matrix $W$ appearing in the assumptions of the following theorem. Under the assumptions of the theorem $W^\dagger = \left[\begin{smallmatrix} R^{-1} & O \\ O & O \end{smallmatrix}\right]$.

\begin{Theorem} \label{thm:Interpolation_and_smoothing} Let $\CH$ and $(\cdot, \cdot)$ be as in Definition~{\rm \ref{def:semi-rep-kernel}}, and let $K$ be a semi-reproducing kernel for $(\CH, (\cdot, \cdot))$ with respect to~$\CH_0$. Further suppose that $K$ is strictly conditionally positive definite with respect to~$\CH_0$. Let $\mu > 0$, $n \geq m=\text{dim } \CH_0$, $ 0\leq p \leq n$, and $W$ be an $n \times n$ matrix of the form
\begin{gather*}
W=\begin{bmatrix} R & O\\
 O & O \end{bmatrix},
\end{gather*}
where $R$ is $p \times p$ and symmetric positive definite. Then given any set $X=\{x_1, \ldots, x_n\}$ of $n$ distinct points in $\CD$ which is unisolvent for $\CH_0$, and $n$ corresponding values $y_i\in\RR$, there is a~unique member of the space~$\CH$ minimizing the quadratic functional
 \begin{gather*}
 (f_{X} - \vy)^{\rm T} W^\dagger (f_{X}-\vy) +\mu (f,f),
 \end{gather*}
over those functions in $\CH$ which satisfy the interpolation conditions
 \begin{gather*}
 f(x_i) = y_i, \qquad p+1 \leq i \leq n.
 \end{gather*}
 This function can be written in the form
 \begin{gather*}
 s = \sum_{i=1}^n a_i K(\cdot, x_i) + \sum_{i=1}^m b_i u_i,
 \end{gather*}
where the coefficients $\va=(a_1,\dots,a_n)^{\rm T}$ and $\vb=(b_1,\dots,b_m)^{\rm T}$ are the solution of the system
 \begin{gather*}
 (K_{X} + \mu W ) \va +C_{X} ^{\rm T} \vb = \vy , \qquad C_{X} \va = \vzero .
 \end{gather*}
\end{Theorem}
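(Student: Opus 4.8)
The plan is to establish that the linear system for $(\va,\vb)$ is uniquely solvable, so that the candidate $s=\sum_{i=1}^n a_i K(\cdot,x_i)+\sum_{i=1}^m b_i u_i$ is well defined, then to verify $s$ is admissible and, by a direct orthogonality computation, that it is the unique minimizer. Write $J(f)$ for the functional to be minimized. Two preliminary observations drive everything. First, the block form of $W$ gives $W^\dagger = \left[\begin{smallmatrix} R^{-1} & O \\ O & O\end{smallmatrix}\right]$ and $WW^\dagger = W^\dagger W = P := \left[\begin{smallmatrix} I_p & O \\ O & O\end{smallmatrix}\right]$, the coordinate projection onto the first $p$ entries. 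Second, since the rows of $C_X$ list the values of a basis $u_1,\dots,u_m$ of $\CH_0$ on $X$, the side condition $C_X\va = \vzero$ says precisely that $(X,\va)$ is an $\CH_0$-increment. This is what places $\sum_i a_i K(\cdot,x_i)$ in $\CH$ by \eqref{eq:first_prop_semi_repro_kernel}, and what lets me invoke both the reproducing property \eqref{eq:second_prop_semi_repro_kernel} and the conditional positive definiteness of $K$ for such $\va$.

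For unique solvability I would show the homogeneous saddle-point system has only the trivial solution. From $(K_X+\mu W)\va + C_X^{\rm T}\vb = \vzero$ and $C_X\va=\vzero$, multiplying the first equation by $\va^{\rm T}$ and using $\va^{\rm T}C_X^{\rm T}\vb=(C_X\va)^{\rm T}\vb=0$ gives $\va^{\rm T}K_X\va + \mu\,\va^{\rm T}W\va = 0$. Each summand is nonnegative --- the first because $\va$ is an $\CH_0$-increment and $K$ is conditionally positive definite, the second because $W$ is positive semidefinite and $\mu>0$ --- so both vanish. Strict conditional positive definiteness then forces $\va=\vzero$, whereupon $C_X^{\rm T}\vb=\vzero$ says that $\sum_i b_i u_i\in\CH_0$ vanishes on $X$; unisolvency and linear independence of the $u_i$ give $\vb=\vzero$.

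With $s$ defined, the first block of the system reads $s_X = K_X\va + C_X^{\rm T}\vb = \vy - \mu W\va$, and the vanishing lower block of $W$ yields $s(x_i)=y_i$ for $p+1\le i\le n$, so $s$ is admissible. For the minimization, take any admissible $f$ and set $h=f-s$, so $h(x_i)=0$ for $i>p$. Expanding the functional gives
\begin{gather*}
J(f)-J(s) = 2(s_X-\vy)^{\rm T}W^\dagger h_X + 2\mu(s,h) + h_X^{\rm T}W^\dagger h_X + \mu(h,h).
\end{gather*}
Here the reproducing property does the work: the $u_i$ lie in the kernel of the semi-inner product and $(X,\va)$ is an increment, so \eqref{eq:second_prop_semi_repro_kernel} gives $(s,h)=\va^{\rm T}h_X$; and $s_X-\vy=-\mu W\va$ together with $W^\dagger W=P$ turns the data cross term into $-\mu\,\va^{\rm T}Ph_X$. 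The cross terms thus collapse to $2\mu\,\va^{\rm T}(I-P)h_X$, and $(I-P)h_X$ isolates the coordinates $i>p$, on which $h$ vanishes; they are therefore zero. Hence $J(f)-J(s)=h_X^{\rm T}W^\dagger h_X+\mu(h,h)\ge 0$, so $s$ minimizes $J$.

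Uniqueness follows by tracing equality: $J(f)=J(s)$ forces $h_X^{\rm T}W^\dagger h_X=0$ and $(h,h)=0$. Since $R^{-1}$ is positive definite the first gives $h(x_i)=0$ for $i\le p$, while the second gives $h\in\CH_0$; together with the interpolation constraints, $h$ then vanishes on all of $X$, and unisolvency yields $h=0$, i.e., $f=s$. I expect the main obstacle to be keeping the two roles of the point set cleanly separated: the increment condition $C_X\va=\vzero$ is what unlocks reproduction and positivity, whereas the annihilation of $(I-P)h_X$ rests on the interpolation constraints over the complementary index set $i>p$; making the cross-term cancellation transparent depends on not conflating these.
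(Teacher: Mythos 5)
The paper itself offers no proof of this theorem: it is quoted from Strauss \cite{Strauss_2002} and noted to be a special case of \cite[Theorem~59]{BerlinetAgnan}, so there is no in-paper argument to compare against. Your blind proof is, as far as I can check, correct and complete, and it is the standard variational argument one would expect: (i) the homogeneous saddle-point system forces $\va^{\rm T}K_X\va+\mu\,\va^{\rm T}W\va=0$, and since $C_X\va=\vzero$ makes $(X,\va)$ an $\CH_0$-increment, strict conditional positive definiteness kills $\va$ and unisolvency kills $\vb$, giving existence and uniqueness of $(\va,\vb)$ for the square system; (ii) admissibility of $s$ from the vanishing lower block of $W$; (iii) the expansion $J(s+h)-J(s)$ with the cross terms collapsing to $2\mu\,\va^{\rm T}(I-P)h_X=0$ via the reproducing property and $WW^\dagger=P$; (iv) uniqueness by tracing the equality case back through $R^{-1}\succ0$, $(h,h)=0$, and unisolvency. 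Two small points are worth making explicit rather than implicit: the identity $(s,h)=\va^{\rm T}h_X$ uses not only the reproducing property \eqref{eq:second_prop_semi_repro_kernel} but also $(u_i,h)=0$, which follows from the Cauchy--Schwarz inequality for the semi-inner product since $(u_i,u_i)=0$; and the existence (not just uniqueness) of the coefficient vector rests on the system being square, so that triviality of the homogeneous solution implies solvability for every right-hand side. Neither is a gap, just a spot where a reader might want the justification spelled out.
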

Note that the statement reduces to the well-known result concerning the solution of the smoothest interpolation problem when $p=0$, and to a known expression for the smoothing spline when $p=n$.

\section[Series representations for thinplate spline kernels on the sphere]{Series representations for thinplate spline kernels\\ on the sphere}\label{sec:Thinplate_splines_on_the_sphere}

For functions on $\RR^d$ interpolating and smoothing with thinplate/polyharmonic splines associated with the energy
\begin{gather*}
E_\kappa(f)= \int_{\RR^d} \sum_{i_1=1,i_2=1,\ldots, i_\kappa=1}^d\left( \frac{\partial}{\partial x_{i_1}} \frac{\partial}{\partial x_{i_2}} \cdots \frac{\partial}{\partial x_{i_\kappa}} f(x)\right)^2 {\rm d}x,
\end{gather*}
are very successful approximation methods. For sufficiently smooth functions $f$, decaying sufficiently fast at infinity, integration by parts gives
\begin{gather*}
E_\kappa(f)= (-1)^\kappa \int_{\RR^d} f(x) (\triangle^\kappa f)(x) {\rm d}x,
\end{gather*}
where $\triangle$ is the Laplacian. Analogues for the sphere come from considering instead of the Laplacian the Laplace--Beltrami operator $ \triangle^\star$, and working on the ``Fourier'' side since the spherical harmonics $\{Y_{nj}\}$ are both a complete orthonormal system for $L^2\big(\Sdmone\big)$ and also eigenfunctions of the Laplace--Beltrami operator. More precisely,
\begin{gather*}
\triangle^\star Y_{j}^n = -n(n+d-2) Y_{j}^n, \qquad j=1,\ldots , N_{d,n}, \qquad n\in\NN_0.
\end{gather*}
For the explicit value of the dimension $N_{d,n}$ see (\ref{eq:Ndn}), below.

This section will consider corresponding spaces of functions on the sphere and the relevant semi-reproducing kernels $K_{d,m}$. The central role played by these semi-reproducing kernels in minimal energy interpolation and regularised least squares fitting is clear from the discussion in Section~\ref{sec:reproducing_kernels}, and in particular Theorem~\ref{thm:Interpolation_and_smoothing}. This topic was first considered by Wahba~\cite{Wahba81} in the $\SS^2$ case. {See also Wahba's monograph~\cite{Wabha90}.} The reader can find valuable additional relevant material in Freeden, Gervens and Schreiner~\cite[Chapter~5]{Fr98}, {Levesley, Light, Ragozin and Sun~\cite{Lev98}}, and Cheney~and Light~\cite[Chapter~32]{CheneyLight_2000}. The material in these references differs somewhat from what appears here. Usually this is due to a treatment based on reproducing kernels rather than semi-reproducing kernels, or to a different choice of the energy.  Gneiting~\cite{Gneiting2013} gives an excellent survey of recent work concerning kernels for the sphere.

Let $\CH^d_n$ denote the space of real harmonic polynomials homogeneous of degree $n$ on~$\R^d$. The spherical harmonics are the restrictions of these to the sphere $\Sdmone$. In a slight abuse of notation the space of spherical harmonics of degree~$n$ on~$\Sdmone$ is also written~$\CH^d_n$. The dimension of the space is
\begin{gather} \label{eq:Ndn}
N_{d,n} = \dim \CH^d_n = \binom{n+d-1}{n} -\binom{n+d-3}{n-2}.
\end{gather}
Spherical harmonics are a complete orthogonal system on $L^2\big(\Sdmone\big)$ with respect to the inner product
\begin{gather} \label{eq:inner_product_sdmone}
[ f, g ]_{\Sdmone} = \frac{1}{\sigma_d} \int_{\Sdmone} f(x)g(x) {\rm d}\sigma(x),
\end{gather}
where
\begin{gather*}
\sigma_d = \frac{2\pi^{\frac d2}}{\Gamma\left(\frac d2\right)} =\frac{2\pi^{\lambda+1}}{\Gamma(\lambda+1)}
\end{gather*}
is the surface area of $\Sdmone$ (for $\lambda$ see \eqref{eq:addition_formula} below).

For the set $\{ Y_{j}^n \colon 1 \leq j \leq N_{d,n} \}$, being an orthonormal basis for $\CH^d_n$, the addition formula
\begin{gather} \label{eq:addition_formula}
\sum_{j=1}^{N_{d,n}} Y_{j}^n(x) Y_{j}^n(y) =\frac{n+\lambda}{\lambda} C^\lambda_n\big( x^{\rm T} y\big)= N_{d,n}W^\lambda_n\big( x^{\rm T} y\big),\qquad \lambda = \frac{d-2}2,
\end{gather}
shows that the reproducing kernels of the spaces $\CH^d_n$ are the zonal polynomials $W_n^\lambda$ which are Gegenbauer polynomials normalized so that $W^\lambda_n(1)=1$. The \emph{Gegenbauer polynomial} of order $\lambda\geq 0$ and degree $n\in\NN_0$ is defined as the hypergeometric polynomial
\begin{gather*}
C_n^\lambda(x) = \frac{\Gamma(n+2\lambda)}{n!\Gamma(2\lambda)}\, \HypFunTO{-n}{n+2\lambda}{\lambda+\frac 12}{\frac{1-x}2}, \qquad x\in[-1,1].
\end{gather*}
The Gegenbauer polynomials are orthogonal with respect to the inner product
\begin{gather}\label{eq:inner_product}
[f,g]_\lambda = \int_{-1}^1 f(x) g(x) \big(1-x^2\big)^{\lambda-\frac 12} {\rm d}x.
\end{gather}
Indeed,
\begin{gather} \label{eq:orthogonality}
\int_{-1}^1 C_n^\lambda(x) C_m^\lambda(x)\big(1-x^2\big)^{\lambda-\frac 12} {\rm d}x =h_n^\lambda \delta_{nm},
\end{gather}
where
\begin{gather*}
h_n^\lambda =\frac{\pi\Gamma(2\lambda+n)} {2^{2\lambda-1}n!(\lambda+n)\Gamma^2(\lambda)}=\frac{\sigma_d}{\sigma_{d-1}}\frac{\lambda}{\lambda+n} C_n^\lambda(1).
\end{gather*}
Note that since the convolution of zonal functions on the sphere remains zonal, the inner pro\-duct~\eqref{eq:inner_product_sdmone} reduces to~\eqref{eq:inner_product} for zonal functions, where $\lambda=\frac{d-2}2$. The remaining weight function has the integral
\begin{gather*}
\int_{-1}^1 \big(1-x^2\big)^{\lambda-\frac 12} {\rm d}x= h^\lambda_0 = \frac{\sigma_d}{\sigma_{d-1}} C^\lambda_0(1).
\end{gather*}
Although, Gegenbauer polynomials provide a complete orthogonal system for all $\lambda>-\frac 12$, we will fix $\lambda=\frac{d-2}2$ throughout this paper.

The constant $C_n^\lambda(1)$ relates to the dimension $N_{d,n}$ given in \eqref{eq:Ndn}; indeed,
\begin{gather*}
C_n^\lambda(1) =\binom{n+d-3}{d-3}=\frac{\lambda}{\lambda+n} N_{d,n}.
\end{gather*}
Since $\{Y_{j}^n\colon 1\leq j \leq N_{d,n},\, n\in\NN_0 \}$ is a complete orthonormal system for
$L^2\big(\Sdmone\big)$, we can consider Fourier series
\begin{gather*}
f ~ \sim \sum_{n=0}^\infty \sum_{j=1}^{N_{d,n}} a_{nj}Y_{j}^n ,
 \end{gather*}
where
\begin{gather*}a_{nj}= \langle f, Y_{j}^n\rangle ,
\end{gather*}
converges to $f\in L^2\big(\Sdmone\big)$ in the $L^2$-sense.

Let $\CF^d_m$ be the subspace of $L^2\big(\Sdmone\big)$ formed by the functions $f\in L^2\big(\Sdmone\big)$ such that
 \begin{gather*}
 \sum_{n=1}^\infty [n(n+d-2)]^m \sum_{j=1}^{N_{d,n}} a_{nj}^2 < \infty.
\end{gather*}
Further, $\CF^{d,\ell}_m$ is the space of all functions $f\in \CF^d_m$ with Fourier coefficients $a_{nj}=0$, for all $1\leq j \leq N_{d,n}$ and $0\leq n\leq \ell$. In what follows we will consider approximations $s$ to $f$ whose smoothness is measured by an inner product on $\CF^{d,\ell}_m$ with an additional spherical polynomial part of degree $\ell$ viewed as a trend. The case most frequently occurring in the literature is that of~$\CF^{d,0}_m$.

For $f,g \in \CF^{d,0}_m$ with $f \sim \sum\limits_{n=1}^\infty \sum\limits_{j=1}^{N_{d,n}} a_{nj} Y^n_j$ and $g \sim \sum\limits_{n=1}^\infty \sum\limits_{j=1}^{N_{d,n}} b_{nj} Y^n_j$ and $m$ even,
 \begin{gather}
 \frac{1}{\sigma_d} \int_{\Sdmone}
 \big(\triangle_{0,d}^{m/2} f\big)(x) \big(\triangle_{0,d}^{m/2} g\big)(x) {\rm d}\sigma(x) \label{eq:semi_inner_energy_form} \\
 = \frac{1}{\sigma_d} \int_{\Sdmone} \! \left( \sum_{n=1}^\infty \sum_{j=1}^{N_{d,n}} [n(n+d-2)]^{m/2}
 a_{nj}Y^n_j(x)\right)\!\left( \sum_{n=1}^\infty \sum_{j=1}^{N_{d,n}} [n(n+d-2)]^{m/2} b_{nj}Y^n_j(x)\right)\!
{\rm d}x \nonumber \\
= \sum_{n=1}^\infty [n(n+d-2)]^{m} \sum_{j=1}^{N_{d,n}} a_{nj} b_{nj},\label{eq:energy_semi_inner_coeff_form}
\end{gather}
by the extended Parseval identity in the space $L^2\big(\Sdmone\big)$. In view of the equality between expressions~\eqref{eq:semi_inner_energy_form} and~\eqref{eq:energy_semi_inner_coeff_form} define an ``energy'' semi-inner product for $\CF^{d}_m$ by
\begin{gather}
(f,g)_{m,\ell}= \sum_{n=\ell+1}^\infty [n(n+d-2)]^{m} \sum_{j=1}^{N_{d,n}} a_{nj} b_{nj} . \label{eq:energy_semi_inner_prod}
\end{gather}
It is clear from \eqref{eq:semi_inner_energy_form} and \eqref{eq:energy_semi_inner_coeff_form} that this is an analogue of the usual semi-inner product associated with smoothing splines on~$\RR^d$.

$(\cdot,\cdot)_{m,\ell}$ is an inner product for $\CF^{d,\ell}_m$. It is easy to show that $\CF^{d,\ell}_m$ with norm $\|f\|_{m,\ell}=\sqrt{(f,f)_{m,\ell}}$ is a Hilbert space. A proof could be based on the arguments in \cite[pp.~247--250]{CheneyLight_2000}.

Now for $x,y\in \Sdmone$ define
 \begin{gather}
K_{d,m,\ell}(x,y) =\sum_{n=\ell+1}^\infty [n(n+d-2)]^{-m} \sum_{j=1}^{N_{d,n}} Y^n_j(x)Y^n_j(y) \nonumber \\
\hphantom{K_{d,m,\ell}(x,y}{} = \sum_{n=\ell+1}^\infty [n(n+d-2)]^{-m} N_{d,n}W^\lambda_n\big(x^{\rm T} y\big), \label{series_for_K}
 \end{gather}
by the addition formula \eqref{eq:addition_formula}. $K_{d,m,\ell}$ is clearly a zonal kernel since it depends only on the cosine of the angle between $x$ and $y$. We will use the notation $K_{d,m}$ for the kernels $K_{d,m,0}$, and call these kernels the {\em thinplate spline kernels for the sphere} $\bbS^{d-1}$. The~$K_{2,m}$ and~$K_{3,m}$ kernels were initially introduced by Wahba~\cite{Wahba81}. Since $K_{d,m,\ell}(x,y)$ is zonal we can sensibly define functions~$k_{d,m,\ell}$ by
 \begin{gather} \label{eq:kernels_and_functions}
 k_{d,m,\ell}(\xi)= K_{d,m,\ell}(x,y), \qquad \text{where}\quad x,y \in \bbS^{d-1}\quad \text{and}\quad \xi=x^{\rm T} y.
 \end{gather}
 Explicitly,
 \begin{gather}
 k_{d,m,\ell}(\xi) = \sum_{n=\ell+1}^\infty [n(n+d-2)]^{-m} N_{d,n}W^\lambda_n(\xi), \label{series_for_k}
 \end{gather}
We will refer to the functions $k_{d,m,\ell}$ as {\em semi-reproducing functions}, and the functions $k_{d,m}=k_{d,m,0}$ as {\em thinplate
 spline functions}.

\begin{Lemma}\label{Le:rk_for_truncFS} Let $\ell$ be a non-negative integer and $2m \geq d \geq 2$.
 \begin{itemize}\itemsep=0pt
 \item[$(a)$] $K_{d,m,\ell}(x,y)$ is the reproducing kernel for the Hilbert space $\CF^{d,\ell}_m$.
 \item[$(b)$] $K_{d,m,\ell}(x,y)$ considered as a function in $C\big(\bbS^{d-1} \times \bbS^{d-1}\big)$ is strictly positive definite.
 \end{itemize}
 \end{Lemma}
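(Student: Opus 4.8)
The two parts both rest on a single convergence estimate, so I would record it first. Since $N_{d,n}$ grows like $n^{d-2}$ (from \eqref{eq:Ndn}) while $[n(n+d-2)]^{-m}$ behaves like $n^{-2m}$, the general term of
\[
\Sigma := \sum_{n=\ell+1}^\infty [n(n+d-2)]^{-m} N_{d,n}
\]
is $O\big(n^{d-2-2m}\big)$. The hypothesis $2m \geq d$ makes the exponent at most $-2$, so $\Sigma < \infty$; this is the only place the dimensional restriction is used, and it is exactly the Sobolev embedding threshold that makes point evaluation bounded on $\CF^{d,\ell}_m$. In particular, since $|W^\lambda_n| \leq 1$ on $[-1,1]$ (Cauchy--Schwarz in \eqref{eq:addition_formula}), the series \eqref{series_for_K} converges absolutely and uniformly, so $K_{d,m,\ell} \in C\big(\Sdmone \times \Sdmone\big)$.

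For part (a), I would first check that $K_{d,m,\ell}(\cdot,y) \in \CF^{d,\ell}_m$ for each fixed $y$. Reading its Fourier coefficients off \eqref{series_for_K}, they are $[n(n+d-2)]^{-m} Y^n_j(y)$ for $n \geq \ell+1$ and $0$ otherwise, so the addition formula \eqref{eq:addition_formula} at $x=y$ gives $\|K_{d,m,\ell}(\cdot,y)\|_{m,\ell}^2 = \Sigma < \infty$. The reproducing identity then reduces to pairing coefficients: for $f \sim \sum_{n \geq \ell+1}\sum_j a_{nj} Y^n_j$,
\[
\big(f, K_{d,m,\ell}(\cdot,y)\big)_{m,\ell} = \sum_{n=\ell+1}^\infty \sum_{j=1}^{N_{d,n}} a_{nj} Y^n_j(y).
\]
The one genuine point is that this equals $f(y)$, i.e.\ that the Fourier series of $f$ converges to $f$ pointwise. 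I would obtain uniform convergence from the Cauchy--Schwarz splitting
\[
\sum_{n=\ell+1}^\infty \Big| \sum_{j=1}^{N_{d,n}} a_{nj} Y^n_j(y)\Big| \leq \sum_{n=\ell+1}^\infty \Big(\sum_j a_{nj}^2\Big)^{1/2} N_{d,n}^{1/2} \leq \|f\|_{m,\ell}\, \Sigma^{1/2},
\]
where again $2m \geq d$ guarantees finiteness; this shows $\CF^{d,\ell}_m \hookrightarrow C\big(\Sdmone\big)$ and validates evaluation at $y$.

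For part (b), expanding the quadratic form and using \eqref{eq:addition_formula} gives, for distinct $x_1,\dots,x_N \in \Sdmone$ and $c \in \RR^N$,
\[
\sum_{i,k} c_i c_k K_{d,m,\ell}(x_i,x_k) = \sum_{n=\ell+1}^\infty [n(n+d-2)]^{-m} \sum_{j=1}^{N_{d,n}} \Big(\sum_i c_i Y^n_j(x_i)\Big)^2 \geq 0,
\]
which is positive semidefiniteness. For strictness, suppose the form vanishes; since every coefficient $[n(n+d-2)]^{-m}$ is positive, this forces $\sum_i c_i Y^n_j(x_i) = 0$ for all $n \geq \ell+1$ and all $j$. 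Deducing $c = 0$ from this is the main obstacle, and I would argue with an approximate identity. Writing $P_r(x,w) = \sum_{n \geq 0} r^n N_{d,n} W^\lambda_n\big(x^{\rm T} w\big)$, $0<r<1$, for the Poisson kernel and setting $F_r = \sum_i c_i P_r(\cdot, x_i)$, the coefficients of $F_r$ vanish for $n \geq \ell+1$, so each $F_r$ is a spherical polynomial of degree $\leq \ell$ and $F_r \to F_1$ uniformly as $r \to 1^-$. Pairing with an arbitrary $\psi \in C^\infty\big(\Sdmone\big)$ and using that $P_r$ is an approximate identity yields $\sum_i c_i \psi(x_i) = [F_1, \psi]_{\Sdmone}$. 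Taking $\psi$ to equal $1$ at one point $x_{i_0}$, to vanish at the remaining points, and then shrinking its support drives the right-hand side to $0$ while the left-hand side stays $c_{i_0}$; hence $c_{i_0}=0$ for every $i_0$, contradicting $c \neq 0$. I expect this strictness step to be the real work: the semidefinite bound is automatic, whereas excluding the degenerate case genuinely uses the distinctness of the points together with the fact that the kernel has infinitely many strictly positive Gegenbauer coefficients.
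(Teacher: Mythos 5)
Your proof is correct. For part (a) you follow essentially the same route as the paper: read off the Fourier coefficients of $K_{d,m,\ell}(\cdot,y)$, use the addition formula to reduce $\|K_{d,m,\ell}(\cdot,y)\|_{m,\ell}^2$ to $\sum_{n>\ell}[n(n+d-2)]^{-m}N_{d,n}$, invoke $N_{d,n}=\mathcal{O}\big(n^{d-2}\big)$ together with $2m\geq d$, and then verify the reproducing identity coefficientwise. The one place you go beyond the paper is the double Cauchy--Schwarz estimate showing the Fourier series of $f\in\CF^{d,\ell}_m$ converges absolutely and uniformly; the paper simply writes $\sum_{n,j}a_{nj}Y^n_j(y)=f(y)$ without comment, so your extra step closes a small gap (it is what legitimizes pointwise evaluation of an $L^2$-defined $f$) rather than duplicating the paper.

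For part (b) you genuinely diverge. The paper does not prove strictness at all: it cites the characterizations of strictly positive definite zonal kernels by Chen, Menegatto and Sun ($d>2$) and by Menegatto ($d=2$), observing only that the Gegenbauer coefficients of $K_{d,m,\ell}$ are positive for all $n\geq\ell+1$ (hence for infinitely many even and infinitely many odd $n$). You instead give a self-contained argument: vanishing of the quadratic form forces $\sum_i c_iY^n_j(x_i)=0$ for all $n\geq\ell+1$, so the measure $\sum_i c_i\delta_{x_i}$ agrees, via the Poisson kernel as an approximate identity, with integration against a spherical polynomial of degree at most $\ell$; testing against shrinking bumps then kills each $c_i$. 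This works precisely because your kernel has positive coefficients for \emph{all} sufficiently large $n$ -- a much stronger hypothesis than the cited theorems require -- so your argument is more elementary but less general. What the citation buys the paper is brevity and the sharp necessary-and-sufficient condition; what your argument buys is independence from the literature and a proof that visibly uses only the distinctness of the points and the cofinitely many positive coefficients. Both are valid; yours is arguably the more instructive for this particular kernel.
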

 \begin{proof} {\em Proof of part $(a)$.} Assume $2m\geq d \geq 2$. To show $K=K_{d,m,\ell}$ is the reproducing kernel for $\CF^{d,\ell}_m$ it suffices to show that the following two properties hold, \cite[p.~317]{Davis75}.
 \begin{itemize}\itemsep=0pt
 \item[(i)] For each fixed $y \in \Sdmone$, $ f(\cdot)=K(\cdot,y)$ is in $\CF^{d,\ell}_m$.
 \item[(ii)] For each function $f \in \CF^{d,\ell}_m$ the reproducing property
 \begin{gather*}
\big( f(\cdot), K(\cdot,y) \big)_{m,\ell} =f(y),
 \end{gather*}
 holds.
 \end{itemize}

To show property (i) let $y$ be some fixed point in $\Sdmone$ and define $k_y(\cdot)=K(\cdot,y)$. From the definition of $K$, $k_y$ has Fourier coefficients
 \begin{gather}
\label{eq:fc_kernel}
 a_{nj}=[n(n+d-2)]^{-m} Y^n_j(y), \qquad n \geq \ell+1 \quad \text{and}\quad 1\leq j \leq N_{d,n}.
 \end{gather}
 Hence,
 \begin{gather*}
(k_y,k_y)_{m,\ell} = \sum_{n=\ell+1}^\infty [n(n+d-2)]^m \sum_{j=1}^{N_{d,n}} (a_{nj})^2 = \sum_{n=\ell+1}^\infty [n(n+d-2)]^{-m} \sum_{j=1}^{N_{d,n}} \left( Y^n_j(y) \right)^2 \\
\hphantom{(k_y,k_y)_{m,\ell}}{} = \sum_{n=\ell+1}^\infty [n(n+d-2)]^{-m} N_{d,n} W^\lambda_n(1)=\sum_{n=\ell+1}^\infty [n(n+d-2)]^{-m} N_{d,n} ,
 \end{gather*}
 where in the second to last step the addition formula~\eqref{eq:addition_formula} has been used. The estimate $N_{d,n}={\mathcal O}\big(n^{d-2}\big)$, holding for $d>1$, shows that the sum above is finite when $2m\geq d$, and hence $k_y \in \CF^{d,\ell}_m$ as required.

To show the reproducing property let $f$ be any function in $\CF^{d,\ell}_m$. Suppose $f$ has Fourier series $ \sum\limits_{n=\ell+1}^\infty \sum\limits_{j=1}^{N_{d,n}} a_{nj} Y^n_j$. Then, from \eqref{eq:energy_semi_inner_prod} and \eqref{eq:fc_kernel}
\begin{gather*}
 ( f, k_y )_{m,\ell} = \sum_{n=\ell+1}^\infty [n(n+d-2)]^{m} \sum_{j=1}^{N_{d,n}} a_{nj} [n(n+d-2)]^{-m} Y^n_j(y) \\
\hphantom{( f, k_y )_{m,\ell}}{} = \sum_{n=\ell+1}^\infty \sum_{j=1}^{N_{d,n}} a_{nj} Y^n_j(y) = f(y). 
\end{gather*}
That is the reproducing property holds.

{\em Proof of part $(b)$.} In view of the characterisations of strict positive definiteness of zonal kernels given by Chen, Menegatto and Sun~\cite{Ch03} for $d>2$, and by Menegatto~\cite{Me06} for $d=2$, part~(b) follows from the signs of the Gegenbauer coefficients of~$K_{d,m}$ displayed in equation~\eqref{series_for_K}.
\end{proof}

Above we have shown $K_{d,m,\ell}$ is the reproducing kernel for the space $\CF^{d,\ell}_m$ which arises from using a Fourier projection onto spherical polynomials to split the space $\CF^d_m$ into a direct sum $\CH_0 \oplus\CF^{d,\ell}_m$, where $\CH_0 = \cup_{n=0}^\ell\CH^d_n$ is the space of spherical polynomials of degree at most~$\ell$.

For interpolation problems it is natural to use instead a projection onto polynomials interpolating at a certain finite set of points, and this results in a different direct sum decomposition. Fortunately, here the semi-reproducing kernel approach becomes especially convenient as $K_{d,m,\ell}$ is a semi-reproducing kernel for the space $\CF^{d,\ell}_m$ with respect to the space of polynomials~$\CH_0$. This is the content of the following easily shown lemma whose proof is included for the sake of completeness.

\begin{Lemma}\label{Le:rk_for_semi-inner_product} Let $\ell$ be a nonnegative integer and $2m \geq d \geq 2$. The reproducing kernel $K_{d,m,\ell}$ for the space $\CF^{d,\ell}_m$ with semi-inner product $(\cdot, \cdot)_{m,\ell}$, defined above, is one choice of semi-reproducing kernel for the space $\CF^{d}_m$ with respect to the space of spherical polynomials of degree $\ell$, $\CH_0 = \cup_{n=0}^\ell \CH^d_n$.
\end{Lemma}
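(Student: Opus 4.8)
The plan is to verify directly the two conditions of Definition~\ref{def:semi-rep-kernel} characterising a semi-reproducing kernel, taking $\CH=\CF^d_m$, the distinguished finite-dimensional subspace to be $\CH_0=\cup_{n=0}^\ell \CH^d_n$, and the semi-inner product to be $(\cdot,\cdot)_{m,\ell}$ from~\eqref{eq:energy_semi_inner_prod}. First I would record that the kernel $K_{d,m,\ell}$ is symmetric (being zonal) and that $\CH_0$ is genuinely the null space of this semi-inner product on $\CF^d_m$: since $(f,f)_{m,\ell}=\sum_{n=\ell+1}^\infty [n(n+d-2)]^m \sum_j a_{nj}^2$ vanishes exactly when $a_{nj}=0$ for all $n\ge \ell+1$, that null space is precisely the spherical polynomials of degree at most $\ell$. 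Its dimension $\sum_{n=0}^\ell N_{d,n}$ is finite, and completeness of $\CF^d_m$ in the induced semi-norm follows from the Hilbert-space structure of $\CF^{d,\ell}_m$ noted above, so the hypotheses of Definition~\ref{def:semi-rep-kernel} are in force.

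Property~\eqref{eq:first_prop_semi_repro_kernel} is essentially free. For any $\CH_0$-increment $(\CX,\va)$ with $\CX=\{x_1,\dots,x_n\}$, each $K_{d,m,\ell}(\cdot,x_j)$ was already shown, in establishing Lemma~\ref{Le:rk_for_truncFS}$(a)$, to lie in $\CF^{d,\ell}_m\subset\CF^d_m$, so the finite combination $\sum_{j=1}^n a_j K_{d,m,\ell}(\cdot,x_j)$ lies in $\CF^d_m$ as required; the increment condition is not even needed here.

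The substance is property~\eqref{eq:second_prop_semi_repro_kernel}, and here I would compute directly. Writing $f\sim\sum_{n=0}^\infty\sum_{j}c_{nj}Y^n_j$ for an arbitrary $f\in\CF^d_m$ and using the Fourier coefficients~\eqref{eq:fc_kernel} of each $K_{d,m,\ell}(\cdot,x_i)$, the combined kernel $g=\sum_i a_i K_{d,m,\ell}(\cdot,x_i)$ has coefficients $b_{nj}=[n(n+d-2)]^{-m}\sum_i a_i Y^n_j(x_i)$ for $n\ge\ell+1$ and $b_{nj}=0$ for $n\le\ell$. Substituting into~\eqref{eq:energy_semi_inner_prod} the weight factors $[n(n+d-2)]^{\pm m}$ cancel, and I would obtain
\begin{gather*}
(f,g)_{m,\ell}=\sum_i a_i\sum_{n=\ell+1}^\infty\sum_j c_{nj}Y^n_j(x_i).
\end{gather*}
This already equals $\sum_i a_i f(x_i)$ apart from the sum over $n$ starting at $\ell+1$ rather than at $0$; the discrepancy is $\sum_i a_i\, p(x_i)$ with $p=\sum_{n=0}^\ell\sum_j c_{nj}Y^n_j\in\CH_0$, which vanishes because $(\CX,\va)$ annihilates $\CH_0$. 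Hence $(f,g)_{m,\ell}=\sum_i a_i f(x_i)$, which is~\eqref{eq:second_prop_semi_repro_kernel}.

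The one point requiring care — and the conceptual heart of the matter — is exactly this last cancellation: the semi-inner product $(\cdot,\cdot)_{m,\ell}$ is blind to the low-degree part of $f$, so the ordinary reproducing identity of Lemma~\ref{Le:rk_for_truncFS} would fail for $f\in\CF^d_m\setminus\CF^{d,\ell}_m$, and it is precisely the restriction to $\CH_0$-increments that strips off the polynomial part and restores reproduction of the point values. Beyond this, the only routine item is justifying the interchange of the finite sum over $i$ with the Fourier series, which is absolutely convergent under $2m\ge d$ by the same Cauchy--Schwarz estimate, with $N_{d,n}=\mathcal{O}(n^{d-2})$, already invoked in the proof of Lemma~\ref{Le:rk_for_truncFS}$(a)$.
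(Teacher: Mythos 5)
Your proof is correct and takes essentially the same route as the paper's: property~\eqref{eq:first_prop_semi_repro_kernel} comes for free from Lemma~\ref{Le:rk_for_truncFS}$(a)$, and property~\eqref{eq:second_prop_semi_repro_kernel} is obtained by reproducing the point values of the high-degree part $f_1=f-P_0f$ and then using the $\CH_0$-increment condition to absorb the degree-$\le\ell$ polynomial part. The only cosmetic difference is that the paper cites the already-established reproducing property of $K_{d,m,\ell}$ on $\CF^{d,\ell}_m$ together with the orthogonality $(h_0,h_1)_{m,\ell}=0$, whereas you unwind that step into an explicit Fourier-coefficient computation.
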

\begin{proof}$\CF^{d}_m = \CH $ is considered as a semi-inner product space with semi-inner product $(\cdot,\cdot)_{m,\ell}$. Choose $P_0$ as Fourier projection onto the kernel $\CH_0$ of the semi-inner product, which is the space of spherical polynomials of degree not exceeding~$\ell$. Set $\CH_1=(I-P_0) \CF^{d}_m=\CF^{d,\ell}_m$. Then clearly $\CH= \CF^{d}_m = \CH_0\oplus \CF^{d,\ell}_m = \CH_0\oplus \CH_1$. Also, for each $x\in\Sdmone$, $K_{d,m,\ell}( \cdot,x) \in \CH_1=\CF^{d,\ell}_m \subset \CF^{d}_m=\CH$ by Lemma~\ref{Le:rk_for_truncFS}(a). Therefore, considering an $\CH_0$-increment $(\CX, \va)$
\begin{gather*}
 \sum_{i=1}^n a_i K_{d,m,\ell}(\cdot,x_i) \in \CF^{d,\ell}_m \subset \CF^{d}_m,
\end{gather*}
the first property, i.e., property~\eqref{eq:first_prop_semi_repro_kernel}, of a semi-reproducing kernel. Also, given any $f\in K^d_m$ the direct sum splitting allows us to write $f=f_0+f_1$ where $f_0 \in \CH_0$ and $f_1 \in \CF^{d,\ell}_m$. Therefore, considering the $\CH_0$-increment $(\CX,\va)$
\begin{gather*}
\left( f(\cdot), \sum_{i=1}^n a_i K_{d,m,\ell}(\cdot,x_i) \right)_{m,\ell}=
\left( f_0(\cdot)+f_1(\cdot), \sum_{i=1}^n a_i K_{d,m,\ell}(\cdot,x_i) \right)_{m,\ell} \\
=\sum_{i=1}^n a_i \big( f_0(\cdot), K_{d,m,\ell}(\cdot,x_i) \big)_{m,\ell}
 +\sum_{i=1}^n a_i \left( f_1(\cdot), \sum_{i=1}^n K_{d,m,\ell}(\cdot,x_i) \right)_{m,\ell} = 0 +\sum_{i=1}^n a_i f_1(x_i) ,
 \end{gather*}
 which follows from $( h_0, h_1 )_{m,\ell}=0$ for all $h_0\in \CH_0$ and $h_1 \in \CF^{d,\ell}_m$, and also from $K_{d,m,\ell}$ being the reproducing kernel for $\CF^{d,\ell}_m$. Continuing, using the vanishing property of $\CH_0$-increments,
\begin{align*}
\left( f(\cdot), \sum_{i=1}^n a_i K_{d,m,\ell}(\cdot,x_i) \right)_{m,\ell}
 &= 0 +\sum_{i=1}^n a_i f_1(x_i)\\
 &= \sum_{i=1}^n a_i f_0(x_i) + \sum_{i=1}^n a_i f_1(x_i) = \sum_{i=1}^n a_i f(x_i),
\end{align*}
the second property, i.e., property~\eqref{eq:second_prop_semi_repro_kernel}, of a semi-reproducing kernel. Therefore, $K_{d,m,\ell}$ is a~semi-reproducing kernel for $\CF^d_m$ with semi-inner product $(\cdot,\cdot)_{m,\ell}$, as required.
\end{proof}

In view of Lemma~\ref{Le:rk_for_semi-inner_product}, Theorem~\ref{thm:Interpolation_and_smoothing} concerning the solution of interpolation and penalized least squares fitting problems applies to interpolation and smoothing problems on the sphere. Applying the theorem the semi-reproducing kernel $K=K_{d,m,\ell}$, defined in equation~\eqref{series_for_K}, plays a central role in interpolation and penalized least squares fitting problems posed in the space $\CH=\CF^d_m$, with semi-inner product $(\cdot,\cdot)_{m,\ell}$ with respect to the finite dimensional subspace $ \CH_0 = \cup_{n=0}^\ell \CH^d_n$. In this section we have given series expansions for these kernels. In Section~\ref{sec:recurrence}, Theorem~\ref{thm:recurrence_for_TPS}, below, we will provide a recurrence relation for the particularly important thinplate spline kernels, $K_{d,m,}$, via a recurrence for the corresponding functions~$k_{d,m}$. In Sections~\ref{sec:explicit_forms} and~\ref{sec:m_equal_1} the recurrence relation will be used to give short explicit expressions for many of the thinplate spline kernels.

\section[The operator $T$ and its adjoint $T^*$]{The operator $\boldsymbol{T}$ and its adjoint $\boldsymbol{T^*}$}\label{sec:the_operators_T_and_T*}

In this section we discuss an operator $T$, and its adjoint $T^*$, which will be crucial parts of the recurrence for the thinplate spline functions~$k_{d,m}$. These operators were defined by Martinez-Morales in~\cite{Ma05}.

In view of the series expansions for the kernels $k_{d,m}$ given in equation~\eqref{series_for_k} a multiplier opera\-tor with Fourier multiplier of $(n(n+2\lambda))^{-1}$ would transform $k_{d,m}$ into $k_{d,m+1}$. The operators~$T$ and~$T^*$, are discussed below have some, but not quite all, the desired properties.

Note that the differential equation for the Gegenbauer polynomials is given by \cite[equation~(22.6.5)]{Abramowitz} or \cite[Table~18.8.1]{DLMF}
\begin{gather*}
\big(1-x^2\big)y'' -(2\lambda+1)x y' + n(n+2\lambda)y = 0.
\end{gather*}
Rewriting this we obtain
\begin{gather*}
\big(1-x^2\big)^{-\lambda+\frac 12} \frac{{\rm d}}{{\rm d}x} \left( \big(1-x^2\big)^{\lambda+\frac 12} \frac{{\rm d}}{{\rm d}x} \right) y = -n(n+2\lambda) y.
\end{gather*}

Call the operator on the left hand side of the equation $D_\lambda$. Then, for $\lambda>0$ and $n\in{\mathbb N}$,
\begin{gather*}
D_\lambda C_n^\lambda(x) = -n(n+2\lambda) C_n^\lambda(x).
\end{gather*}
Let us formally invert the equation $D_\lambda f = g$ using averages centered at~1. Dealing with the outer derivative in $D_\lambda$ we obtain that
\begin{gather*}
\int_x^1 \big(1-y^2\big)^{\lambda-\frac 12} g(y) {\rm d}y = \left. \big(1-y^2\big)^{\lambda+\frac 12}\frac{{\rm d}}{{\rm d}y}f(y) \right|_x^1.
\end{gather*}
If
\begin{gather}\label{eq:condition1}
f'(y) \ \text{is continuous at}\ y=1,
\end{gather}
the term on the right hand side for $y=1$ vanishes. We can then proceed obtaining
\begin{gather*}
 \int_x^1 \big(1-y^2\big)^{-\lambda-\frac 12} \int_y^1 \big(1-z^2\big)^{\lambda-\frac 12} g(z) {\rm d}z {\rm d}y =f(x) - f(1).
\end{gather*}
Setting $f(x)=C_n^\lambda(x)$ and $g(x)=-n(n+2\lambda)C_n^\lambda(x)$, condition (\ref{eq:condition1}) is clearly satisfied. We therefore obtain the following statement.

\begin{Proposition}\label{prop:eigenfunction_Tadjoint}For $\lambda>0$ and $n\in{\mathbb N}$
\begin{gather*}
\int_x^1 \big(1-y^2\big)^{-\lambda-\frac 12} \int_y^1 \big(1-z^2\big)^{\lambda-\frac 12} C_n^\lambda(z) {\rm d}z {\rm d}y = \frac{C_n^\lambda(1)-C_n^{\lambda}(x)}{n(n+2\lambda)}.
\end{gather*}
\end{Proposition}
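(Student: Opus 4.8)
The plan is to make rigorous the formal inversion already carried out before the statement: everything follows from two applications of the fundamental theorem of calculus to the Sturm--Liouville form of the Gegenbauer equation, the single point requiring care being the vanishing of a boundary term at the right endpoint. The starting identity is $D_\lambda C_n^\lambda = -n(n+2\lambda)C_n^\lambda$, recorded above; multiplying through by $(1-x^2)^{\lambda-\frac{1}{2}}$ puts it in the divergence form
\[
\frac{{\rm d}}{{\rm d}x}\left[\big(1-x^2\big)^{\lambda+\frac{1}{2}}\big(C_n^\lambda\big)'(x)\right] = -n(n+2\lambda)\big(1-x^2\big)^{\lambda-\frac{1}{2}}C_n^\lambda(x).
\]

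First I would integrate this identity in $x$ from $y$ to $1$. By the fundamental theorem of calculus the left-hand side collapses to $\big[(1-t^2)^{\lambda+\frac{1}{2}}(C_n^\lambda)'(t)\big]_y^1$, and here the contribution at $t=1$ vanishes: since $\lambda>0$ we have $\lambda+\frac{1}{2}>0$, while $(C_n^\lambda)'$, being a polynomial, is continuous and bounded at $t=1$, so $(1-t^2)^{\lambda+\frac{1}{2}}(C_n^\lambda)'(t)\to 0$ as $t\to1$. This is exactly the situation anticipated by condition~\eqref{eq:condition1}, trivially met because $C_n^\lambda$ is a polynomial, and it is the one genuine (if minor) obstacle in the argument. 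The inner integral on the right converges since $\lambda-\frac{1}{2}>-1$. After rearranging, what remains is
\[
\big(1-y^2\big)^{-\lambda-\frac{1}{2}}\int_y^1 \big(1-z^2\big)^{\lambda-\frac{1}{2}}C_n^\lambda(z)\,{\rm d}z = \frac{\big(C_n^\lambda\big)'(y)}{n(n+2\lambda)},\qquad y\in(-1,1),
\]
where dividing by $n(n+2\lambda)$ is legitimate precisely because $n\in\mathbb{N}$ is positive.

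Next I would integrate this equation in $y$ from $x$ to $1$. The decisive simplification is that the right-hand side is $(C_n^\lambda)'(y)/(n(n+2\lambda))$, a polynomial in $y$ and hence continuous on all of $[-1,1]$; this identification settles for free the integrability of the outer integrand $(1-y^2)^{-\lambda-\frac{1}{2}}(\cdots)$ at the singular endpoints $\pm1$, which is otherwise the only place convergence could be in doubt. Applying the fundamental theorem of calculus once more gives $\int_x^1 (C_n^\lambda)'(y)\,{\rm d}y = C_n^\lambda(1)-C_n^\lambda(x)$, so that
\[
\int_x^1 \big(1-y^2\big)^{-\lambda-\frac{1}{2}}\int_y^1 \big(1-z^2\big)^{\lambda-\frac{1}{2}}C_n^\lambda(z)\,{\rm d}z\,{\rm d}y = \frac{C_n^\lambda(1)-C_n^\lambda(x)}{n(n+2\lambda)},
\]
which is the asserted formula. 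The whole computation is elementary; I expect the main (and only) subtlety to be confirming that the $t=1$ boundary term dies, which the polynomial nature of $C_n^\lambda$ together with $\lambda+\frac{1}{2}>0$ disposes of immediately.
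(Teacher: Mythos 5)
Your proof is correct, but it takes a different route from the one the paper actually uses. The paper's proof of Proposition~\ref{prop:eigenfunction_Tadjoint} is the ``direct'' verification: it quotes the tabulated antiderivative \cite[equation~(22.13.2)]{Abramowitz},
\begin{gather*}
\int_0^x \big(1-y^2\big)^{\lambda-\frac 12} C_n^\lambda(y)\,{\rm d}y =\frac{2\lambda}{n(2\lambda+n)}\big[ C_{n-1}^{\lambda+1}(0) - \big(1-x^2\big)^{\lambda+\frac 12}C_{n-1}^{\lambda+1}(x) \big],
\end{gather*}
splits $\int_y^1=\int_0^1-\int_0^y$ to conclude that the inner integral equals $\frac{2\lambda}{n(2\lambda+n)}\big(1-y^2\big)^{\lambda+\frac 12} C_{n-1}^{\lambda+1}(y)$, and then finishes with the derivative formula $\frac{{\rm d}}{{\rm d}x}C_n^\lambda = 2\lambda C_{n-1}^{\lambda+1}$. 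You instead make rigorous the formal Sturm--Liouville inversion that the paper only sketches informally before stating the proposition: integrating the divergence form of the Gegenbauer equation from $y$ to $1$, checking that the boundary term $\big(1-t^2\big)^{\lambda+\frac 12}\big(C_n^\lambda\big)'(t)$ dies at $t=1$, and then integrating the resulting polynomial once more. The two arguments meet at exactly the same intermediate identity (since $\big(C_n^\lambda\big)'=2\lambda C_{n-1}^{\lambda+1}$), so neither is ``more'' correct; yours is self-contained modulo the ODE and needs no table lookup, while the paper's avoids any discussion of boundary terms or convergence by leaning on two standard identities from \cite{Abramowitz} and \cite{DLMF}. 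Your observation that the outer integrand is actually a polynomial, which settles integrability at both endpoints for free, is the right remark to make and is implicit in the paper's computation as well.
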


\begin{proof}We would like to give a direct proof of the statement. Towards this goal, we use an integral given in \cite[equation~(22.13.2)]{Abramowitz} (or \cite[equation~(18.17.1)]{DLMF} for the general Jacobi case)
\begin{gather*}
\int_0^x \big(1-y^2\big)^{\lambda-\frac 12} C_n^\lambda(y) {\rm d}y =\frac{2\lambda}{n(2\lambda+n)}\big[ C_{n-1}^{\lambda+1}(0) - \big(1-x^2\big)^{\lambda+\frac 12}C_{n-1}^{\lambda+1}(x) \big].
\end{gather*}
Decomposing the integral over $[0,1]$ into two integrals over $[0,x]$ and $[x,1]$, respectively, we can use the formula to obtain that
\begin{gather*}
\int_y^1 \big(1-z^2\big)^{\lambda-\frac 12} C_n^\lambda(z) {\rm d}z =
\frac{2\lambda}{n(2\lambda+n)} \big(1-y^2\big)^{\lambda+\frac 12} C_{n-1}^{\lambda+1}(y).
\end{gather*}
Note that the function on the right hand side vanishes at $y=1$. Towards the claim, it remains to integrate the polynomial $C_{n-1}^{\lambda+1}$ which readily follows from \cite[equation~(18.9.19)]{DLMF}
\begin{gather*}
\frac{{\rm d}}{{\rm d}x} C_n^\lambda(x) = 2\lambda C_{n-1}^{\lambda+1}(x)\quad \Leftrightarrow \quad C_n^{\lambda}(x) = 2\lambda \int C_{n-1}^{\lambda+1}(y) {\rm d}y,
\end{gather*}
completing the proof.
\end{proof}

Similarly, we could have treated the average centered at $-1$ giving the following result.
\begin{Proposition}\label{prop:eigenfunction_T}
For $\lambda>0$ and $n\in{\mathbb N}$ we have that
\begin{gather*}
\int_{-1}^x \big(1-y^2\big)^{-\lambda-\frac 12} \int_{-1}^y \big(1-z^2\big)^{\lambda-\frac 12} C_n^\lambda(z) {\rm d}z {\rm d}y =\frac{C_n^\lambda(-1)-C_n^{\lambda}(x)}{n(n+2\lambda)}.
\end{gather*}
\end{Proposition}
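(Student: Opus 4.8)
The plan is to obtain this statement as the mirror image of Proposition~\ref{prop:eigenfunction_Tadjoint}, exploiting the parity relation $C_n^\lambda(-t) = (-1)^n C_n^\lambda(t)$ for the Gegenbauer polynomials. First I would apply the reflection $t \mapsto -t$ to every integration variable in Proposition~\ref{prop:eigenfunction_Tadjoint}. Substituting $z = -w$ in the inner integral sends $\int_y^1$ to $\int_{-1}^{-y}$ and extracts a factor $(-1)^n$ from $C_n^\lambda(-w)$, while the weight $(1-z^2)^{\lambda-1/2}$ is unchanged since it is even; substituting $y = -v$ in the outer integral then sends $\int_x^1$ to $\int_{-1}^{-x}$ and the weight $(1-y^2)^{-\lambda-1/2}$ is again even. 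Tracking these changes turns the left-hand side of Proposition~\ref{prop:eigenfunction_Tadjoint} into $(-1)^n$ times the left-hand side of the present claim, now evaluated at $-x$.

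For the right-hand side the same parity relation gives $C_n^\lambda(1) = (-1)^n C_n^\lambda(-1)$ and $C_n^\lambda(x) = (-1)^n C_n^\lambda(-x)$, so that $C_n^\lambda(1) - C_n^\lambda(x) = (-1)^n ( C_n^\lambda(-1) - C_n^\lambda(-x) )$. Equating the two transformed sides and cancelling the common factor $(-1)^n$ then yields the asserted identity, after renaming $-x$ back to $x$. This reduction is the shortest route, and it inherits the validity of every intermediate step (in particular the vanishing of the boundary term at the endpoint) directly from Proposition~\ref{prop:eigenfunction_Tadjoint}.

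Alternatively, one can mimic the direct computation of Proposition~\ref{prop:eigenfunction_Tadjoint} verbatim. Reflecting the integral~\cite[equation~(22.13.2)]{Abramowitz} (equivalently, combining its $[0,x]$ form with the piece over $[-1,0]$) gives $\int_{-1}^y (1-z^2)^{\lambda-1/2} C_n^\lambda(z)\,{\rm d}z = -\frac{2\lambda}{n(2\lambda+n)}(1-y^2)^{\lambda+1/2} C_{n-1}^{\lambda+1}(y)$, a function that vanishes at $y = -1$. Dividing by the outer weight $(1-y^2)^{-\lambda-1/2}$ leaves the polynomial $C_{n-1}^{\lambda+1}$, which integrates over $[-1,x]$ by means of $\frac{\rm d}{{\rm d}x}C_n^\lambda = 2\lambda C_{n-1}^{\lambda+1}$ to produce $\frac{C_n^\lambda(-1)-C_n^\lambda(x)}{n(n+2\lambda)}$, as claimed.

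I do not expect a genuine obstacle here, since the statement is the symmetric counterpart of an already established identity. The only care required is the bookkeeping of signs and integration limits under the reflection, and the verification that the boundary term at the inner endpoint vanishes: in the reflection approach this is inherited from Proposition~\ref{prop:eigenfunction_Tadjoint}, while in the direct approach it follows from the factor $(1-y^2)^{\lambda+1/2}$, which vanishes at $y = -1$ because $\lambda > 0$.
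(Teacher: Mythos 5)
Your proposal is correct. The paper gives no separate proof of this proposition---it simply remarks that one ``could have treated the average centered at $-1$'' in the same way as Proposition~\ref{prop:eigenfunction_Tadjoint}---and your second, direct route is exactly that intended argument: the antiderivative identity from \cite[equation~(22.13.2)]{Abramowitz} yields $\int_{-1}^y (1-z^2)^{\lambda-\frac12}C_n^\lambda(z)\,{\rm d}z = -\frac{2\lambda}{n(2\lambda+n)}(1-y^2)^{\lambda+\frac12}C_{n-1}^{\lambda+1}(y)$ (the sign and the vanishing at $y=-1$ both check out), after which the outer weight cancels and $\frac{{\rm d}}{{\rm d}x}C_n^\lambda = 2\lambda C_{n-1}^{\lambda+1}$ finishes the computation. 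Your lead argument, reducing the statement to Proposition~\ref{prop:eigenfunction_Tadjoint} via the parity $C_n^\lambda(-t)=(-1)^n C_n^\lambda(t)$, is a genuinely different and slightly cleaner route: it avoids repeating the integral computation entirely and inherits the boundary-term analysis from the already-proved case, at the cost of a little bookkeeping with limits and the factor $(-1)^n$, all of which you track correctly. Either route suffices.
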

The operators
\begin{gather*}
T_\lambda f(x) = - \int_{-1}^x \big(1-y^2\big)^{-\lambda-\frac 12} \int_{-1}^y \big(1-z^2\big)^{\lambda-\frac 12} f(z) {\rm d}z {\rm d}y
\end{gather*}
and
\begin{gather*}
T_\lambda^*f(x) = - \int_{x}^1 \big(1-y^2\big)^{-\lambda-\frac 12} \int_{y}^1 \big(1-z^2\big)^{\lambda-\frac 12} f(z) {\rm d}z {\rm d}y
\end{gather*}
have been defined in \cite{Ma05}, showing that $T_\lambda^*$ is the adjoint of $T_\lambda$ with respect to the inner pro\-duct~(\ref{eq:inner_product}). To be precise, the following statement holds true (cf.\ \cite[Theorem~3]{Ma05}).

\begin{Theorem}\label{thm:Morales_theorem3} Let $f\in C[-1,1)\cap L^1_\lambda[-1,1]$ and $g\in C(-1,1]\cap L^1_\lambda[-1,1]$. Then $gT_\lambda f, fT_\lambda^*g\in L^1_\lambda[-1,1]$ and
\begin{gather*}
[T_\lambda f,g]_\lambda = [f,T_\lambda^*g]_\lambda.
\end{gather*}
\end{Theorem}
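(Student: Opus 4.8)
The plan is to recognise both sides as the same triple integral over a tetrahedral region, the two orders of integration corresponding to $T_\lambda$ and to $T_\lambda^*$, and to deduce equality by Fubini's theorem after a relabelling. Substituting the definition of $T_\lambda$, the left-hand side is
\[
[T_\lambda f,g]_\lambda = -\int_{-1}^1\!\int_{-1}^x\!\int_{-1}^y f(z)g(x)\big(1-z^2\big)^{\lambda-\frac12}\big(1-y^2\big)^{-\lambda-\frac12}\big(1-x^2\big)^{\lambda-\frac12}\,{\rm d}z\,{\rm d}y\,{\rm d}x,
\]
an integral over the simplex $\{-1<z<y<x<1\}$ in which $f$ is evaluated at the innermost and $g$ at the outermost variable. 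Substituting the definition of $T_\lambda^*$, the right-hand side becomes the analogous integral over $\{-1<x<y<z<1\}$, with $f$ at the smallest and $g$ at the largest variable. Interchanging the names of $x$ and $z$ carries one region and integrand exactly onto the other, so once Fubini is justified the two sides agree.

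Everything therefore rests on the absolute convergence of
\[
J = \int_{-1}^1 |g(x)|\big(1-x^2\big)^{\lambda-\frac12}\int_{-1}^x \big(1-y^2\big)^{-\lambda-\frac12}\int_{-1}^y \big(1-z^2\big)^{\lambda-\frac12}|f(z)|\,{\rm d}z\,{\rm d}y\,{\rm d}x.
\]
Finiteness of $J$ does three things at once: it shows $g\,T_\lambda f\in L^1_\lambda[-1,1]$, its mirror image (the analogous integral with the roles of $f,g$ and of the endpoints exchanged) shows $f\,T_\lambda^* g\in L^1_\lambda[-1,1]$, and, by Tonelli's theorem, it licenses the interchange of integration order used above.

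I would bound $J$ from the inside out. The inner integral $F(y)=\int_{-1}^y \big(1-z^2\big)^{\lambda-\frac12}|f(z)|\,{\rm d}z$ is at most $\|f\|_{L^1_\lambda}$ for every $y$, and the continuity of $f$ at $-1$ — this is the role of the hypothesis $f\in C[-1,1)$ — forces $F(y)=\mathcal{O}\big((1+y)^{\lambda+\frac12}\big)$ as $y\to-1$, exactly matching the singularity of $\big(1-y^2\big)^{-\lambda-\frac12}$ there. Hence the middle integral $G(x)=\int_{-1}^x \big(1-y^2\big)^{-\lambda-\frac12}F(y)\,{\rm d}y$ converges and stays bounded near $-1$. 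For $\lambda>\frac12$ one finds $G(x)=\mathcal{O}\big((1-x)^{\frac12-\lambda}\big)$ as $x\to1$, whence $G(x)\big(1-x^2\big)^{\lambda-\frac12}=\mathcal{O}(1)$; multiplying by $|g(x)|$, which is bounded near $1$ because $g\in C(-1,1]$, makes the outer integrand integrable at $x=1$ (the cases $\lambda\le\frac12$ are milder). Near $x=-1$ integrability is immediate from $g\in L^1_\lambda[-1,1]$ together with $G(-1)=0$. Thus $J<\infty$.

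The main obstacle is precisely this endpoint bookkeeping. The weight $\big(1-y^2\big)^{-\lambda-\frac12}$ is non-integrable at $\pm1$ for $\lambda\ge\frac12$, and $T_\lambda f$ itself typically blows up like $(1-x)^{\frac12-\lambda}$ as $x\to1$; the proof works only because two cancellations occur — the vanishing of the inner integral at the base point $-1$, guaranteed by continuity of $f$ there, and the absorption of the growth of $T_\lambda f$ at $1$ by the measure weight together with the boundedness of $g$ at $1$. Once $J<\infty$ is established, Fubini and the relabelling $x\leftrightarrow z$ complete the argument with no further computation, the claims for $f\,T_\lambda^* g$ and the identity $[f,T_\lambda^* g]_\lambda$ following from the symmetric estimate in which the roles of $f$ and $g$ and of the endpoints $+1$ and $-1$ are exchanged.
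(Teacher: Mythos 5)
Your proof is correct and takes essentially the same route as the paper, which only sketches the argument: rewrite both sides as one triple integral over a simplex, note that the weight $\big(1-z^2\big)^{\lambda-\frac12}$ in the inner integral together with continuity of $f$ at the base point cancels the singularity of $\big(1-y^2\big)^{-\lambda-\frac12}$ in the outer integral, and conclude by Fubini--Tonelli after relabelling. Your endpoint estimates simply supply the bookkeeping the paper leaves implicit.
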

The proof exploits the fact that with the weight $\big(1-z^2\big)^{\lambda-\frac 12}$ in the inner integral, continuity of $f$ suffices to cope with the singularity introduced by the weight $\big(1-y^2\big)^{-\lambda-\frac 12}$ in the outer integral of~$T_\lambda$. Based on this observation, the theorem follows from Fubini's theorem.

Propositions~\ref{prop:eigenfunction_Tadjoint} and~\ref{prop:eigenfunction_T} above thus show that the polynomials $C_n^\lambda$ are basically~-- up to a~constant~-- eigenfunctions of $T^*$ and $T$, respectively. This is somewhat obvious from the fact that both $T$ and $T^*$ invert $D_\lambda$. Both propositions have been derived in \cite[Lemma~1]{Ma05} via a~different proof employing the Rodriguez formula of the Gegenbauer polynomials.

\section{A recurrence for the thinplate spline functions for the sphere} \label{sec:recurrence}

This section concerns a recurrence for the thinplate spline functions $k_{d,m}$ for $\bbS^{d-1}$. The recurrence will be used in Sections~\ref{sec:explicit_forms} and~\ref{sec:m_equal_1} to give short explicit forms for many of the functions $k_{d,m}$. It is important to note that the corresponding recurrence given in \cite[Theorem~4]{Ma05} is incorrect and does not yield the thinplate spline functions $k_{d,m}$. Consequently, many of the explicit formulas claimed for the functions $k_{d,m}$ in the paper~\cite{Ma05} are also incorrect.

\begin{Theorem}\label{thm:recurrence_for_TPS}
Let $d\geq 2$, $\lambda=\frac{d-2}2$, $x\in[-1,1]$, and $e_0(x)=1$ for all $x\in[-1,1]$. The thinplate spline functions, $k_{d,m}$, $m\in\bbN$, for the sphere $\cS^{d-1}$, defined via the series~\eqref{series_for_k}, are alternatively generated by the recurrence
\begin{gather*}
k_{d,m}(x) = \begin{cases} \dfrac{[ e_0, T_\lambda e_0]_\lambda}{[ e_0,e_0]_\lambda} -(T_\lambda e_0)(x) ,&\text{when}\ m=1, \\
 (T_\lambda k_{d,m-1} )(x) - \dfrac{[ e_0, T_\lambda k_{d,m-1}]_\lambda}{[ e_0,e_0]_\lambda}, & \text{when}\ m>1.
\end{cases}
\end{gather*}
\end{Theorem}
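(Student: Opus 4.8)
The plan is to prove the recurrence by comparing Fourier--Gegenbauer coefficients of the two sides, using the adjoint operator $T_\lambda^*$ and Theorem~\ref{thm:Morales_theorem3}, rather than attempting to push $T_\lambda$ through the series~\eqref{series_for_k} termwise. First I would record the action of $T_\lambda^*$ on the normalised zonal polynomials. Proposition~\ref{prop:eigenfunction_Tadjoint}, together with $W_j^\lambda = C_j^\lambda/C_j^\lambda(1)$ and $W_j^\lambda(1)=1$, gives
\[
 T_\lambda^* W_j^\lambda = \frac{W_j^\lambda - e_0}{j(j+2\lambda)}, \qquad j \geq 1 .
\]
The only normalisation constant the argument needs is the ratio $[e_0,e_0]_\lambda/[W_j^\lambda,W_j^\lambda]_\lambda$. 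Writing $[W_j^\lambda,W_j^\lambda]_\lambda = h_j^\lambda/C_j^\lambda(1)^2$ from~\eqref{eq:orthogonality} and inserting $h_j^\lambda=\frac{\sigma_d}{\sigma_{d-1}}\frac{\lambda}{\lambda+j}C_j^\lambda(1)$, $h_0^\lambda=\frac{\sigma_d}{\sigma_{d-1}}$ and $N_{d,j}=\frac{\lambda+j}{\lambda}C_j^\lambda(1)$, this collapses neatly to $[e_0,e_0]_\lambda/[W_j^\lambda,W_j^\lambda]_\lambda = N_{d,j}$.

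Combining these via the adjoint identity of Theorem~\ref{thm:Morales_theorem3} applied with the polynomial $W_j^\lambda$ in the second slot, I would derive a single master formula for the $j$-th Gegenbauer coefficient $\hat g_j = [g,W_j^\lambda]_\lambda/[W_j^\lambda,W_j^\lambda]_\lambda$: for any $f$ in the function class required by that theorem and every $j\geq 1$,
\[
 \widehat{(T_\lambda f)}_j = \frac{\hat f_j - \hat f_0\, N_{d,j}}{j(j+2\lambda)} .
\]
Both clauses of the recurrence then fall out of this one formula. For $m>1$ I take $f = k_{d,m-1}$, which has $\hat f_0 = 0$ and $\hat f_j = N_{d,j}[j(j+2\lambda)]^{-(m-1)}$; the correction term dies and the coefficient becomes exactly $N_{d,j}[j(j+2\lambda)]^{-m}$, the $j$-th coefficient of $k_{d,m}$ in~\eqref{series_for_k}. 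For $m=1$ I take $f = e_0$, which has $\hat f_0 = 1$ and $\hat f_j = 0$; now only the correction term survives, giving $-N_{d,j}[j(j+2\lambda)]^{-1}$, the \emph{negative} of the $j$-th coefficient of $k_{d,1}$.

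In both cases $T_\lambda f$ agrees with the target ($k_{d,m}$, respectively $-k_{d,1}$) in every coefficient with $j\geq 1$ and differs only in its degree-zero component; subtracting $[e_0,T_\lambda f]_\lambda/[e_0,e_0]_\lambda$ removes precisely that component. Since $k_{d,m}$ and $k_{d,1}$ carry no degree-zero part, matching coefficients for all $j\geq 0$ together with completeness of $\{W_j^\lambda\}$ in $L^2_\lambda$ yields the two stated identities, the sign reversal in the $m=1$ clause being exactly the flip produced by the master formula. A pleasant feature is that the unknown degree-zero constant never has to be evaluated: the recurrence is built so that the subtraction cancels it automatically.

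The main obstacle is twofold. The routine but error-prone part is the constant bookkeeping that produces $[e_0,e_0]_\lambda/[W_j^\lambda,W_j^\lambda]_\lambda = N_{d,j}$; this is precisely the sort of normalisation where the earlier treatment in~\cite{Ma05} went astray, so I would carry it out carefully. The genuinely delicate point is legitimacy: to invoke Theorem~\ref{thm:Morales_theorem3} I must verify that $k_{d,m-1}$ (resp.\ $e_0$) lies in $C[-1,1)\cap L^1_\lambda[-1,1]$ and that the pairings $[f,e_0]_\lambda$ are finite. For $2m\geq d\geq 2$ and $\lambda>0$ the series~\eqref{series_for_k} converges to a function continuous on $[-1,1)$ with at worst an integrable singularity at $x=1$, so the hypotheses hold; the boundary situations $\lambda=0$ (that is, $d=2$) and the threshold $2m=d$ require the estimate $N_{d,n}=\mathcal{O}\big(n^{d-2}\big)$ from Lemma~\ref{Le:rk_for_truncFS} to be invoked explicitly to secure convergence.
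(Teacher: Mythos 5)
Your proposal is correct and follows essentially the same route as the paper: your ``master formula'' for $\widehat{(T_\lambda f)}_j$ is precisely the paper's identity~\eqref{eq:fourier_coefficient_of_Tf} (written in the $W_j^\lambda$ rather than the $C_j^\lambda$ normalisation), obtained in the same way from Theorem~\ref{thm:Morales_theorem3} and Proposition~\ref{prop:eigenfunction_Tadjoint}, and both clauses are then settled by coefficient matching and uniqueness exactly as in the paper. The only structural difference is that the paper defines the recursively generated functions $f_m$ first and propagates continuity and $L^1_\lambda$-membership through the recurrence by induction, rather than reading these regularity properties off the series for $k_{d,m-1}$ as you do.
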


\begin{proof}Fix $d\geq 2$ and define a sequence of functions $(f_1, f_2, \ldots)$ by the recurrence
\begin{gather} \label{eq:recursion_stated_for_f}
f_{m}(x) = \begin{cases} \dfrac{[ e_0, T_\lambda e_0]_\lambda}{[ e_0,e_0]_\lambda} -(T_\lambda e_0)(x) ,&\text{when}\ m=1,\\
( T_\lambda f_{m-1})(x) - \dfrac{[ e_0, T_\lambda f_{m-1}]_\lambda}{[ e_0,e_0]_\lambda}, & \text{when}\ m>1,
\end{cases}
\end{gather}
this recursion mirroring the one in the statement of the theorem.

Throughout this proof we view the series definition \eqref{series_for_k} as an orthogonal expansion
\begin{gather*}
g \sim \sum_{n=0}^\infty \widehat{g}_n C^\lambda_n
\end{gather*}
in terms of the Gegenbauer polynomials $C^\lambda_n$. The uniqueness theorem tells us that functions with the same coefficients are identical.

First, observe that the constant term in the definition of $f_m$ ensures that the zeroth Fourier coefficient $\widehat{(f_m)}_0$ is zero. Therefore, consider in what follows Fourier coefficients of index $n\geq 1$.

Clearly, $e_0\in C[-1,1]\cap L^1_\lambda[-1,1]$. It therefore follows from Theorem~\ref{thm:Morales_theorem3} that $f_1$ is conti\-nuous on $[-1,1]$ and $f_1\in L^1_\lambda[-1,1]$. Using induction on $m$ we can then conclude, again using Theorem~\ref{thm:Morales_theorem3}, that~$f_m$ is continuous on $[-1,1]$ and $f_m\in L^1_\lambda[-1,1]$ for $m>1$.

For a function $f\in L^1_\lambda[-1,1]$, and $n\geq 1$, Theorem~\ref{thm:Morales_theorem3} gives that
\begin{gather*}
\widehat{T_\lambda f}_n = \frac 1{h_n^\lambda}\int_{-1}^1 T_\lambda f(x) C_n^\lambda(x) \big(1-x^2\big)^{\lambda-\frac 12} {\rm d}x
 = \frac 1{h_n^\lambda} \int_{-1}^1 f(x) T_\lambda^* C_n^\lambda(x) \big(1-x^2\big)^{\lambda-\frac 12} {\rm d}x,
\end{gather*}
which by Proposition~\ref{prop:eigenfunction_Tadjoint} yields
\begin{gather}
\widehat{T_\lambda f}_n = \frac 1{n(n+2\lambda)} \frac{1}{h_n^\lambda} \int_{-1}^1 f(x) C_n^{\lambda}(x)\big(1-x^2\big)^{\lambda-\frac 12} {\rm d}x\nonumber\\
\hphantom{\widehat{T_\lambda f}_n=}{} - \frac 1{n(n+2\lambda)} \frac{C_n^\lambda(1)}{h_n^\lambda}
\int_{-1}^1 f(x) \big(1-x^2\big)^{\lambda-\frac 12} {\rm d}x. \label{eq:critical_Fourier_coeff}
\end{gather}

In the special case of $f=e_0$, the first integral vanishes due to orthogonality~(\ref{eq:orthogonality}). Furthermore,
\begin{gather*}
\int_{-1}^1 e_0(x)\big(1-x^2\big)^{\lambda-\frac 12} {\rm d}x= \int_{-1}^1 \big(1-x^2\big)^{\lambda-\frac 12} {\rm d}x = [ e_0, e_0 ]_\lambda = h_0^\lambda.
\end{gather*}
We therefore obtain that for $n\geq 1$,
\begin{gather*}
\widehat{(T_\lambda e_0)}_n = - \frac{1}{n(n+2\lambda)} \frac{C_n^\lambda(1)}{h_n^\lambda} h^\lambda_0 = -\frac{1}{n(n+2\lambda)} \frac{n+\lambda}\lambda = -\frac{1}{n(n+2\lambda)} \frac{N_{d,n}}{C_n^\lambda(1)}.
\end{gather*}
Thus, the function $f_1$ generated as specified in equation~\eqref{eq:recursion_stated_for_f} has the same Fourier coefficients as the thinplate spline function $k_{d,1}$ of equation~\eqref{series_for_k}. Thus, by uniqueness, it is~$k_{d,1}$.

Now returning to general functions $f$ we can rewrite \eqref{eq:critical_Fourier_coeff} as
\begin{gather}\label{eq:fourier_coefficient_of_Tf}
\widehat{T_\lambda f}_n = \frac 1{n(n+2\lambda)} {\widehat f}_n- \frac 1{n(n+2\lambda)} C_n^\lambda(1)\frac{h^\lambda_0}{h_n^\lambda} {\widehat f}_0.
\end{gather}

From (\ref{eq:fourier_coefficient_of_Tf}) and the definition of~$f_m$ we have that for $m\geq 2$ and $n \geq 1$
\begin{gather*}
\big(\widehat{f_m}\big)_n = \frac{1}{n(n+2\lambda)} \big(\widehat{f_{m-1}}\big)_n -\frac 1{n(n+2\lambda)} C_n^\lambda(1)\frac{h_0^\lambda}{h_n^\lambda} \big(\widehat{f_{m-1}}\big)_0= \frac{1}{n(n+2\lambda)} \big(\widehat{f_{m-1}}\big)_n,
\end{gather*}
since $\big(\widehat{f_{m-1}}\big)_0=0$. Thus, by induction
\begin{gather*}\big(\widehat{f_m}\big)_n =
\begin{cases}
0, & \text{when}\ n=0,\\
 (n(n+2\lambda) )^{-m} \dfrac{N_{d,n}}{C^\lambda_n(1)}, & \text{when} \ n \geq 1.
\end{cases}
\end{gather*}
Hence $f_m$ has the same Fourier coefficients as $k_{d,m}$. Therefore, by uniqueness, it is~$k_{d,m}$. That is the thinplate spline functions,
$k_{d,m}$, are generated by the recursion of the theorem. \end{proof}

\section[Explicit forms for some of the thinplate spline functions $k_{d,m}$]{Explicit forms for some of the thinplate spline functions $\boldsymbol{k_{d,m}}$}\label{sec:explicit_forms}

The reader will recall the correspondence between the zonal thinplate spline kernels $K_{d,m}$ and the associated functions $k_{d,m}$, see~\eqref{eq:kernels_and_functions}. The recurrences of the previous section yield explicit formulas for many of the thinplate spline functions $k_{d,m}$. A sample of these explicit expressions is presented below, thereby correcting the expressions given in~\cite{Ma05}.

In the formulas that follow $u= \frac{1-x}{2}$ and $v = \frac{\pi}{2}+\arcsin(x) $. Note that in angular coordinates $x=\cos\theta$ and $v=\pi-\theta$.
\subsection[Functions for $\bbS^1$]{Functions for $\boldsymbol{\bbS^1}$}
\begin{gather*}
k_{2,1} = \frac{1}{2}v^2 -\frac{1}{6}\pi^2, \qquad
k_{2,2} = -\frac{1}{24} v^4 + \frac{1}{12}\pi^2 v^2 -\frac{7}{360}\pi^4, \\
k_{2,3} =\frac{1}{720}v^6 -\frac{1}{144} \pi^2 v^4 + \frac{7}{720}\pi^4 v^2 -\frac{31}{15120}\pi^6,\\
k_{2,4}= -\frac{1}{40320}v^8 +\frac{1}{4320}\pi^2 v^6 -\frac{7}{8640}\pi^4 v^4 +\frac{31}{30240}\pi^6v^2 -\frac{127}{60480}\pi^8.
\end{gather*}
See Wahba~\cite[p.~22]{Wabha90} for explicit forms of these functions in terms of Bernoulli polynomials.

 \subsection[Functions for $\bbS^2$]{Functions for $\boldsymbol{\bbS^2}$}
\begin{gather*}
k_{3,1} =-\ln (u) -1, \qquad k_{3,2} = \Li_2(1-u) +1 -\frac{\pi^2}{6}, \\
k_{3,3} = -2\Li_3(u) -\Li_2(1-u)+\ln (u)\Li_2(u) + 2\zeta(3)+\frac{\pi^2}{6}-2.
\end{gather*}

\subsection[Functions for $\bbS^3$]{Functions for $\boldsymbol{\bbS^3}$}
\begin{gather*}
k_{4,1} =\frac{1}{2}\frac{x v}{\sqrt{1-x^2}} -\frac{1}{4}, \qquad k_{4,2} = \frac{1}{8}v^2 +\frac{1}{16}-\frac{\pi^2}{24}.
\end{gather*}
\subsection[Functions for $\bbS^4$]{Functions for $\boldsymbol{\bbS^4}$}
\begin{gather*}
k_{5,1} = -\frac{1}{3}\ln(u) +\frac{1}{6u} -\frac{7}{9}, \qquad
k_{5,2} = \frac{1}{9}\Li_2(1-u) -\frac{2}{9}\ln(u) +\frac{\ln(u)}{9(x+1)} +\frac{1}{81}-\frac{\pi^2}{54}.
\end{gather*}
 \subsection{Some functions for higher dimensional spheres}
 \begin{gather*}
k_{6,1} = xv\left( \frac{1}{4\sqrt{1-x^2} }+\frac{1}{8\big(1-x^2\big)^{3/2}} \right) +\frac{1}{8\big(1-x^2\big)} - \frac{5}{16},\\
k_{8,1} = xv \left( \frac{1}{6\sqrt{1-x^2}} +\frac{1}{12\big(1-x^2\big)^{3/2}}+\frac{1}{16\big(1-x^2\big)^{5/2}} \right)\\
\hphantom{k_{8,1} =}{} +\frac{1}{16\big(1-x^2\big)} +\frac{1}{16\big(1-x^2\big)^2} -\frac{5}{18},
\end{gather*}
and in general, as is shown in Section~\ref{subsection8_1} below, for $d=2\lambda+2$ even, i.e., when $\lambda$ is an integer,
\begin{gather*}
 k_{2\lambda+2,1} (x) = x v \sum_{j=1}^\lambda c_j^\lambda \big(1 -x^2\big)^{-j+\frac{1}{2}}+ \sum_{j=1}^{\lambda-1} d_j^\lambda \big(1 -x^2\big)^{-j} -C_\lambda .
\end{gather*}

Also,
 \begin{gather*}
k_{7,1} = -\frac{1}{5} \ln(u) +\frac{1}{10 u}+\frac{1}{60 u^2} -\frac{43}{75},\\
k_{9,1} = -\frac{1}{7} \ln(u) +\frac{1}{14 u} +\frac{1}{70u^2}+\frac{1}{420u^3} -\frac{337}{735},\\
k_{11,1} = -\frac{1}{9}\ln(u) +\frac{1}{18u} +\frac{1}{84u^2}+\frac{1}{378u^3}+\frac{1}{2520u^4}-\frac{1091}{2835},
\end{gather*}
and in general, as is shown in Section~\ref{subsection8_2} below, for $d=2\kappa+3$ odd,
 \begin{gather*}
 k_{2\kappa+3,1}(x) =\frac{-1}{2\kappa+1} \ln(u) + \sum_{\nu=1}^\kappa g^\lambda_\nu(1-x)^{-\nu} -D_\lambda,
\end{gather*}
with $g^\lambda_\nu$ as given in \eqref{eq:def_g_lambda_nu}.

\section[Explicit formulas for the thinplate spline functions $k_{d,1}$, $d>2$]{Explicit formulas for the thinplate spline functions $\boldsymbol{k_{d,1}}$, $\boldsymbol{d>2}$} \label{sec:m_equal_1}

In the section simple explicit formulas will be obtained for the thinplate spline functions~$k_{d,1}$. As explained at the end of Section~\ref{sec:Thinplate_splines_on_the_sphere} the function $k_{d,1}$ is associated with approximation problems on~$\Sdmone $.

Theorem~\ref{thm:recurrence_for_TPS} and the formula for the operator $T_\lambda$ given in Proposition~\ref{prop:eigenfunction_T} lead to a~method of calculating~$k_{d,1}$. Define
\begin{gather}
G_\beta^\alpha (y)= \int_{-1}^y \left(1-z^2\right)^{\beta -\half} (1-z)^{-\alpha} {\rm d}z, \qquad \text{where}\quad \alpha \geq 0\quad \text{and} \quad \beta >\alpha -\half, \label{eq:Glambda_def}
\end{gather}
$G_\beta=G_\beta^0$, and
\begin{gather*} 
 F_\beta(x) = \int_{-1}^1 \big(1-y^2\big)^{-\beta -\half } G_\beta(y) {\rm d}y.
 \end{gather*}
Then, with $\lambda = (d-2)/2$,
\begin{gather} \label{eq:express1_for_kd1}
 k_{d,1}(x) = F_\lambda (x) - \frac{[ F_\lambda, e_0]_\lambda}{[e_0,e_0]_\lambda} .
 \end{gather}

Actually it is somewhat easier to deal with the indefinite integral
\begin{gather} \label{eq:Slambda}
S_\lambda (y) = \int \big(1-y^2\big)^{-(2\lambda +1)/2} G_\lambda (y) {\rm d}y,
\end{gather}
where here we mean any fixed representative value of the indefinite integral, since the second term in \eqref{eq:express1_for_kd1} deletes the constant term. Then, from equation~\eqref{eq:express1_for_kd1},
\begin{gather} \label{eq:alternative_formula_for_k_d1}
 k_{d,1}(x) = S_\lambda (x) - \frac{[S_\lambda, e_0]_\lambda}{[e_0,e_0]_\lambda}.
 \end{gather}

 \subsection[The functions $k_{d,1}$ when $d$ is even]{The functions $\boldsymbol{k_{d,1}}$ when $\boldsymbol{d}$ is even} \label{subsection8_1}
 This subsection considers the functions $k_{d,1}$ when $d$ is even. Thus in this subsection $\lambda$ is a~positive integer.
\begin{Lemma}
\begin{gather}
G_0 (y) =\frac{\pi}{2} + \arcsin(y), \label{eq:G0}
\end{gather}
and
\begin{gather}
G_\beta(y) = \frac{1}{2\beta} y \big(1-y^2\big)^{(2\beta-1)/2} + \frac{2\beta-1}{2\beta} G_{\beta-1}(y), \qquad \beta > 1/2.\label{eq:Glambda_recur}
\end{gather}
Explicitly, for $\lambda \in \N$,
\begin{gather}
G_\lambda (y) =a_0^\lambda \left(\frac{\pi}{2} + \arcsin (y) \right) +a_1^\lambda y \big(1-y^2\big)^{1/2}+ a_2^\lambda y\big(1-y^2\big)^{3/2} + \cdots\nonumber\\
\hphantom{G_\lambda (y) =}{} +a_\lambda^\lambda y \big(1-y^2\big)^{(2\lambda-1)/2},\label{eq:Glambda_form}
\end{gather}
where
\begin{gather*} 
a_j^\lambda = \begin{cases} \dfrac{(2j-2)!!}{(2j-1)!!} \dfrac{(2\lambda-1)!!}{(2\lambda)!!}, &1 \leq j \leq \lambda,\\
a_1^\lambda,& j=0,\\
0, & \text{otherwise}.
\end{cases}
\end{gather*}
\end{Lemma}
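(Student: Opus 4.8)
The plan is to establish the three assertions in turn — the first two by direct computation and the third by induction on $\lambda$ using the second.

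For~\eqref{eq:G0} I would evaluate the defining integral~\eqref{eq:Glambda_def} at $\beta=\alpha=0$: since an antiderivative of $\big(1-z^2\big)^{-\half}$ is $\arcsin$, one gets $G_0(y)=\arcsin(y)-\arcsin(-1)=\frac{\pi}{2}+\arcsin(y)$.

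For the recurrence~\eqref{eq:Glambda_recur} the single idea is to differentiate the product $z\big(1-z^2\big)^{\beta-\half}$. A short calculation gives
\[
\frac{{\rm d}}{{\rm d}z}\big[z\big(1-z^2\big)^{\beta-\half}\big] = \big(1-z^2\big)^{\beta-\half} - (2\beta-1)z^2\big(1-z^2\big)^{\beta-\frac 32},
\]
and substituting $z^2=1-\big(1-z^2\big)$ in the last term collapses the right-hand side to $2\beta\big(1-z^2\big)^{\beta-\half}-(2\beta-1)\big(1-z^2\big)^{\beta-\frac 32}$. Integrating this identity over $[-1,y]$, and noting that the boundary contribution at $z=-1$ vanishes precisely because $\beta>\half$, yields $y\big(1-y^2\big)^{\beta-\half}=2\beta\,G_\beta(y)-(2\beta-1)G_{\beta-1}(y)$, which rearranges to~\eqref{eq:Glambda_recur} once one recalls $\beta-\half=(2\beta-1)/2$.

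For the closed form~\eqref{eq:Glambda_form} I would induct on $\lambda$. The base case $\lambda=1$ comes from inserting~\eqref{eq:G0} into~\eqref{eq:Glambda_recur} with $\beta=1$, giving $G_1(y)=\frac12\big(\frac{\pi}{2}+\arcsin y\big)+\frac12 y\big(1-y^2\big)^{\half}$ and hence $a_0^1=a_1^1=\half$. In the inductive step, applying~\eqref{eq:Glambda_recur} with $\beta=\lambda$ to the assumed expansion of $G_{\lambda-1}$ multiplies every existing coefficient by $(2\lambda-1)/(2\lambda)$ and appends a new highest-order term $y\big(1-y^2\big)^{(2\lambda-1)/2}$ with coefficient $1/(2\lambda)$. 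It then remains only to verify the coefficient recursions $a_j^\lambda=\frac{2\lambda-1}{2\lambda}a_j^{\lambda-1}$ for $0\le j\le\lambda-1$ and $a_\lambda^\lambda=\frac{(2\lambda-2)!!}{(2\lambda)!!}=\frac{1}{2\lambda}$, which follow by absorbing the factor $(2\lambda-1)/(2\lambda)$ into the ratio $(2\lambda-3)!!/(2\lambda-2)!!$ to obtain $(2\lambda-1)!!/(2\lambda)!!$.

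I expect the only delicate point to be the double-factorial bookkeeping in this last step; all the analytic content sits in the one-line differentiation identity of the middle paragraph, and the vanishing of the boundary term there is the one place where the hypothesis $\beta>\half$ (equivalently $\lambda\ge 1$) is genuinely used.
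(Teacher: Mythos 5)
Your proposal is correct and follows essentially the same route as the paper: a direct evaluation for $G_0$, an integration-by-parts identity (your differentiate-then-integrate computation is the same calculation) for the recurrence, and a telescoping of the factor $(2\beta-1)/(2\beta)$ through the recurrence to pin down the coefficients $a_j^\lambda$, including the special check that $a_0^1=a_1^1=\tfrac12$ forces $a_0^\lambda=a_1^\lambda$. Organizing the coefficient bookkeeping as an induction on $\lambda$ rather than by tracking each term from its first appearance is only a cosmetic difference.
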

\begin{proof} Equation~\eqref{eq:G0} follows immediately from the definition~\eqref{eq:Glambda_def}. The recurrence~\eqref{eq:Glambda_recur} follows from the definition~\eqref{eq:Glambda_def} via an easy integration by parts.

The general form of the explicit expression for $G_\lambda(y)$, given in equation~\eqref{eq:Glambda_form}, is clear from the expression for $G_0(y)$ and the recurrence~\eqref{eq:Glambda_recur}. Consider now the expression for the coefficients $a_j^\lambda$ in equation~\eqref{eq:Glambda_form}. Considering $G_j(y)$ it is clear from the form of $G_{j-1}(y)$, and the recurrence, that the coefficient $a_j^j$ of $y\big(1-y^2\big)^{(2j-1)/2}$ is $1/(2j)$. Now applying the recurrence $\lambda-j$ times to yield $G_{j+1}(y)$, $G_{j+2}(y)$, up to $G_\lambda(j)$, in turn, it follows that
\begin{gather*}
a^\lambda_j = \frac{1}{2j} \frac{2j+1}{2j+2} \frac{2j+3}{2j+4} \cdots \frac{2\lambda-1}{2\lambda}
 = \frac{1}{2j} \frac{(2\lambda-1)!!}{(2j-1)!!} \frac{(2j)!!}{(2\lambda)!!}\\
 \hphantom{a^\lambda_j}{} = \frac{(2\lambda-1)!!}{(2j-1)!!} \frac{(2j-2)!!}{(2\lambda)!!} , \qquad 1 \leq j \leq \lambda.
\end{gather*}
Further, the expression for $G_0(y)$ and the recurrence, yield the explicit expression
\begin{gather*}G_1(y) = \frac{1}{2} \left(\frac{\pi}{2}+\arcsin(y)\right) + \frac{1}{2} y \big(1-y^2\big)^{1/2}.
\end{gather*}
Therefore, $a_0^1= a_1^1 = 1/2$. Consequently, the recurrence implies that $a_0^\lambda = a_1^\lambda$ for all
$\lambda \in \N$.
\end{proof}

Analogously, define the indefinite integral
\begin{gather} \label{eq:defn_of_Ht}
H_\beta(y)= \int \big(1 -y^2\big)^{-(2\beta+1)/2} {\rm d}y, \qquad \beta > -1/2.
\end{gather}
This indefinite integral is often well defined when the corresponding definite integral $G_{-\beta} $ of equation~\eqref{eq:Glambda_def} is not.
\begin{Lemma}Some representatives of the indefinite integral $H_\beta$ are
\begin{gather}
H_0 (y) = \int \frac{1}{\sqrt{1-y^2}} {\rm d}y = \frac{\pi}{2} + \arcsin(y), \label{eq:H0}
\end{gather}
and
\begin{gather}
H_1(y) = \int \big(1 -y^2\big)^{-3/2} {\rm d}y = \frac{y}{\sqrt{1-y^2}}. \label{eq:H1}
\end{gather}
Families of representatives may be generated by the recurrence
\begin{gather}
H_\beta(y) = \frac{1}{2\beta-1}y \big(1-y^2\big)^{-(2\beta-1)/2} + \frac{2\beta-2}{2\beta-1} H_{\beta-1}(y), \qquad t \geq 1.
\label{eq:Ht_recur}
\end{gather}
Explicitly, for $\lambda \in \N$, starting from $H_1$, as given by equation~\eqref{eq:H0}, the recurrence generates representatives of the form
\begin{gather}
H_\lambda(y) = b^\lambda_1 y \big(1-y^2\big)^{-1/2}\! +b^\lambda_2 y \big(1-y^2\big)^{-3/2}\! + \cdots +
b^\lambda_\lambda y \big(1-y^2\big)^{-(2\lambda-1)/2}, \qquad\! \lambda \in \N.\!\! \label{eq:Hk_form}
\end{gather}
where
\begin{gather*} 
b_j^\lambda = \begin{cases} \dfrac{1}{2j-1} \dfrac{(2j-1)!!}{(2j-2)!!} d\dfrac{(2\lambda-2)!!}{(2\lambda-1)!!},
& 1 \leq j \leq \lambda,\\
0, & \text{otherwise}.
\end{cases}
\end{gather*}
\end{Lemma}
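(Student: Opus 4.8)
The plan is to prove the four assertions in turn, each reducing to an elementary differentiation once the correct antiderivative is in hand. The two base cases \eqref{eq:H0} and \eqref{eq:H1} are standard: $\int (1-y^2)^{-1/2}\,dy = \arcsin(y)$ up to a constant, with the additive constant $\pi/2$ chosen so that the chosen representative agrees with the convention used for $G_0$ in \eqref{eq:G0}, while $\frac{d}{dy}\big( y(1-y^2)^{-1/2} \big) = (1-y^2)^{-3/2}$ is a one-line computation giving \eqref{eq:H1}. These anchor the recurrence.

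For the recurrence \eqref{eq:Ht_recur} the idea is to exhibit the integrand of $H_\beta$ as (part of) an explicit derivative. First I would compute, by the product rule,
\begin{gather*}
\frac{d}{dy}\big[ y\big(1-y^2\big)^{-(2\beta-1)/2} \big] = \big(1-y^2\big)^{-(2\beta-1)/2} + (2\beta-1)\, y^2 \big(1-y^2\big)^{-(2\beta+1)/2}.
\end{gather*}
Substituting $y^2 = 1-(1-y^2)$ in the last term and collecting the two resulting powers of $(1-y^2)$, the right-hand side becomes $(2-2\beta)\big(1-y^2\big)^{-(2\beta-1)/2} + (2\beta-1)\big(1-y^2\big)^{-(2\beta+1)/2}$. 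Recognising the first power as the integrand of $H_{\beta-1}$ and the second as that of $H_\beta$, I would integrate, solve the resulting identity for $H_\beta$, and divide by $2\beta-1$; this is exactly \eqref{eq:Ht_recur}. Since we deal with indefinite integrals the equality holds only up to an additive constant, which is precisely the freedom of representatives that the statement allows.

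The closed form \eqref{eq:Hk_form} together with the formula for $b_j^\lambda$ is then proved by induction on $\lambda$, starting from $H_1$; the base case $\lambda=1$ gives the single term $b_1^1 = 1$, consistent with the formula, which at $\lambda=1$ reduces to $\frac{1}{1}\cdot\frac{1!!}{0!!}\cdot\frac{0!!}{1!!}=1$. For the inductive step I would feed a representative of $H_{\lambda-1}$ of the assumed form into \eqref{eq:Ht_recur}. The new leading term $\frac{1}{2\lambda-1}\,y\big(1-y^2\big)^{-(2\lambda-1)/2}$ supplies the top coefficient $b_\lambda^\lambda = \frac{1}{2\lambda-1}$, matching the claimed value, and no $\arcsin$ term is ever reintroduced because the recurrence coefficient $\frac{2\beta-2}{2\beta-1}$ vanishes at $\beta=1$, so the pure form is self-sustaining from $H_1$ upward. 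Each lower term, $1\le j\le\lambda-1$, is merely rescaled, giving $b_j^\lambda = \frac{2\lambda-2}{2\lambda-1}\, b_j^{\lambda-1}$, so the only thing left to verify is that the claimed double-factorial expression obeys this relation; this reduces to the identity $\frac{2\lambda-2}{2\lambda-1}\,\frac{(2\lambda-4)!!}{(2\lambda-3)!!} = \frac{(2\lambda-2)!!}{(2\lambda-1)!!}$, which is immediate from $(2\lambda-2)!! = (2\lambda-2)(2\lambda-4)!!$ and $(2\lambda-1)!! = (2\lambda-1)(2\lambda-3)!!$. The main, and essentially the only, obstacle is this bookkeeping with double factorials; the rest is routine differentiation and integration.
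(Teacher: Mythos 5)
Your proposal is correct and follows essentially the same route as the paper: base cases by direct differentiation, the recurrence \eqref{eq:Ht_recur} obtained by integration by parts (your differentiate-the-candidate-and-rearrange computation is the same calculation), and the coefficients $b_j^\lambda$ tracked through the recurrence, with $b_j^j=1/(2j-1)$ entering as the new top term and each subsequent step multiplying by $(2\lambda-2)/(2\lambda-1)$. The only cosmetic difference is that the paper writes the resulting telescoping product out explicitly while you verify the double-factorial closed form by induction; the two are equivalent.
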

\begin{proof}Equations~\eqref{eq:H0} and \eqref{eq:H1} follow immediately from the definition~\eqref{eq:defn_of_Ht}. The
recurrence~\eqref{eq:Ht_recur} follows from the definition~\eqref{eq:defn_of_Ht} via an easy integration by parts. The general form of the explicit expression for $H_\lambda(y)$, $\lambda\in \N$, given in equation~\eqref{eq:Hk_form}, is clear from the expression for $H_1(y)$ and the recurrence~\eqref{eq:Ht_recur}. Consider now the expression for the coefficients $b_j^\lambda$ ocurring in equation~\eqref{eq:Hk_form}. From the recurrence and the formula for $H_1 $ the term involving $ y \big(1-y^2\big)^{-(2j-1)/2}$ first appears for $\lambda=j$, where it has the value $b_j^j =1/(2j-1)$. This term is then propagated to the functions $H_\lambda$, with $\lambda>j$, via the recurrence. Hence,
\begin{gather*}
b^\lambda_j = \frac{1}{2j-1} \frac{2j}{2j+1} \frac {2j+2}{2j+3} \cdots \frac{2\lambda-2}{2\lambda-1}
 = \frac{1}{2j-1} \frac{(2\lambda-2)!!}{(2j-2)!!} \frac {(2j-1)!!}{(2\lambda-1)!!}, \qquad 1 \leq j \leq \lambda.\tag*{\qed}
\end{gather*}\renewcommand{\qed}{}
\end{proof}

Given the formula \eqref{eq:Glambda_form} for $G_\lambda(y)$ when $\lambda$ is a positive integer, the definition~\eqref{eq:defn_of_Ht} of~$H_\lambda$ and the definition~\eqref{eq:Slambda} of~$S_\lambda$,
\begin{gather*}
S_\lambda(y) = \int \big(1 -y^2\big)^{-(2\lambda +1)/2} \bigg[ a_0^\lambda \left( \frac{\pi}{2} + \arcsin(y) \right) \\
\hphantom{S_\lambda(y) =}{} +a_1^\lambda y \big(1 -y^2\big)^{1/2} + \cdots +
a_\lambda^\lambda y \big(1-y^2\big)^{(2\lambda -1)/2} \bigg] {\rm d}y\\
\hphantom{S_\lambda(y)}{} = \int a_0^\lambda \left( \frac{\pi}{2} +\arcsin(y) \right) {\rm d} H_\lambda(y) \\
\hphantom{S_\lambda(y) =}{}
 +\int a_1^\lambda y \big(1 -y^2\big)^{-\lambda} + a_2^\lambda y \big(1 -y^2\big)^{-\lambda +1} + \cdots +
a_\lambda^\lambda y \big(1 -y^2\big)^{-1} {\rm d}y= I_1 + I_2,
\end{gather*}
where $I_1$ and $I_2$ are the first and second indefinite integrals, respectively. Ignoring the constant parts in the indefinite integrals a~representative value of~$I_2$ is
\begin{gather*}
I_2 = a_1^\lambda \frac{\big(1 -y^2\big)^{-(\lambda-1)}}{2(\lambda-1)} + a_2^\lambda \frac{\big(1-y^2\big)^{-(\lambda-2)} }{2(\lambda-2)}
+ \cdots + a_{\lambda-1}^\lambda\frac{\big(1-y^2\big)^{-1}}{2} - a_\lambda^\lambda \frac{\ln \big(1 -y^2\big)}{2} .
\end{gather*}
A representative value of $I_1$ is
\begin{gather*}
I_1 = a_0^\lambda \left[ \left( \frac{\pi}{2} + \arcsin(y) \right) H_\lambda(y) - \int H_\lambda (y) \frac{1}{\sqrt{1 -y^2}} {\rm d}y \right] \\
\hphantom{I_1}{} = a_0^\lambda \left( \frac{\pi}{2} + \arcsin(y)\right) H_\lambda (y)\\
\hphantom{I_1=}{}
- a_0^\lambda \int b_1^\lambda y \big(1 -y^2\big)^{-1} +b_2^\lambda y \big(1 -y^2\big)^{-2} + \cdots +b_\lambda^\lambda y \big(1 -y^2\big)^{-\lambda} {\rm d}y \\
\hphantom{I_1}{} = a_0^\lambda \left( \frac{\pi}{2} + \arcsin(y)\right) H_\lambda (y) \\
\hphantom{I_1=}{} +a_0^\lambda \left[ \frac{b_1^\lambda \ln \big(1 -y^2\big)}{2} + \frac{b_2^\lambda \big(1 -y^2\big)^{-1}}{2}
+ \frac{b_3^\lambda \big(1-y^2\big)^{-2} }{4} + \cdots + b_\lambda^\lambda \frac{\big(1-y^2\big)^{-(\lambda-1)}}{2\lambda -2} \right].
\end{gather*}

Now note that
\begin{gather*}
a_0^\lambda b_1^\lambda = \frac{(2\lambda-1)!!}{(2\lambda)!!} \frac{ (2\lambda -2)!!}{(2\lambda-1)!!} = \frac{1}{2\lambda} = a_\lambda^\lambda .
\end{gather*}
Hence, the terms in $I_1$ and $I_2$ involving $\ln \big(1 -y^2\big)$ have coefficients of equal magnitude and opposite sign. Thus, we conclude that a representative value of $S_\lambda (y)$ is
\begin{gather*}
S_\lambda(y) = \left(\frac{\pi}{2} + \arcsin(y) \right)\sum_{j=1}^\lambda c_j^\lambda y \big(1 -y^2\big)^{-j+\frac{1}{2}} + \sum_{j=1}^{\lambda-1} d_j^\lambda \big(1 -y^2\big)^{-j} ,
\end{gather*}
where
\begin{gather*} c_j^\lambda = a_0^\lambda b_j^\lambda = \frac{1}{2 \lambda } \frac{1}{2j-1} \frac{(2j-1)!!}{(2j-2)!!} ,
\end{gather*}
and
\begin{gather*} d_j^\lambda =\frac{1}{2j} \big( a_{\lambda-j}^\lambda - c_{j+1}^\lambda \big)\\
\hphantom{d_j^\lambda}{} =\frac{1}{2j} \left[ \frac{(2\lambda-1)!!}{ (2\lambda-2j-1)!!} \frac{ (2\lambda-2j-2)!!}{ ( 2\lambda )!!} - \frac{1}{2\lambda} \frac{(2j-1)!!}{(2j)!!} \right], \qquad 1 \leq j \leq \lambda-1.
\end{gather*}

Now recall from equation~\eqref{eq:alternative_formula_for_k_d1} that
\begin{gather*}
k_{d,1} = S_\lambda(y) -\frac{[S_\lambda, e_0]_\lambda}{[e_0,e_0]_\lambda},
\end{gather*}
where $\lambda=(d-2)/2$. To calculate this quantity first define $f_\mu$ as the Beta integral
\begin{gather*} 
f_\mu = \int_{-1}^1 \big(1 -y^2\big)^\mu {\rm d}y = \frac{\sqrt{\pi} \Gamma(\mu+1)}{\Gamma(\mu+3/2)},\qquad \mu > -1.
\end{gather*}
Then
\begin{gather*}
[S_\lambda,e_0]_\lambda = \int_{-1}^1 \sum_{j=1}^{\lambda-1} d_j^\lambda \big(1-y^2\big)^{-j} \big(1-y^2\big)^{\lambda -\frac{1}{2}} {\rm d}y \\
\hphantom{[S_\lambda,e_0]_\lambda =}{} + \int_{-1}^1 \sum_{j=1}^\lambda c_j^\lambda \left( \frac{\pi}{2} + \arcsin(y) \right) y
\big(1 -y^2\big)^{-j+\frac{1}{2}} \left(1 -y^2 \right)^{\lambda -\frac{1}{2}} {\rm d}y \\
\hphantom{[S_\lambda,e_0]_\lambda}{}= \sum_{j=1}^{\lambda-1} d_j^\lambda f_{\lambda -j -\frac{1}{2}}
+ \sum_{j=1}^\lambda c_j^\lambda
 \int_{-1}^1 \left( \frac{\pi}{2} + \arcsin(y) \right) {\rm d} \left( \frac{ - \big(1 -y^2\big) ^{\lambda -j +1}}{2 (\lambda -j +1)} \right) \\
\hphantom{[S_\lambda,e_0]_\lambda}{} = \sum_{j=1}^{\lambda-1} d_j^\lambda f_{\lambda -j -\frac{1}{2}} + \sum_{j=1}^\lambda
 c_j^\lambda \int_{-1}^1 \frac{ \big(1-y^2\big)^{\lambda-j+1}}{2(\lambda-j+1)} \frac{1}{\sqrt{1-y^2}} {\rm d}y \\
\hphantom{[S_\lambda,e_0]_\lambda}{}= \sum_{j=1}^\lambda
 \frac{1}{2(\lambda-j+1)} c_j^\lambda f_{\lambda-j +\frac{1}{2}} + \sum_{j=1}^{\lambda-1} d_j^\lambda f_{\lambda-j -\frac{1}{2}}.
 \end{gather*}
 Therefore, substituting the various quantities into equation~\eqref{eq:alternative_formula_for_k_d1}
 \begin{gather*} 
 k_{d,1} (y) = \left(\frac{\pi}{2} + \arcsin(x) \right)\sum_{j=1}^\lambda c_j^\lambda y \big(1 -y^2\big)^{-j+\frac{1}{2}}+ \sum_{j=1}^{\lambda-1} d_j^\lambda \big(1 -y^2\big)^{-j} -C_\lambda ,
 \end{gather*}
 where
 \begin{gather*}
 C_\lambda = \frac{1}{f_{\lambda-\frac{1}{2}}}\left(\sum_{j=1}^\lambda \frac{1}{2(\lambda-j+1)} c_j^\lambda f_{\lambda-j +\frac{1}{2}} + \sum_{j=1}^{\lambda-1} d_j^\lambda f_{\lambda-j -\frac{1}{2}}\right) .
 \end{gather*}

\subsection[The functions $k_{d,1}$ when $d>1$ is odd]{The functions $\boldsymbol{k_{d,1}}$ when $\boldsymbol{d>1}$ is odd} \label{subsection8_2}
Now turn to the calculation of the functions $k_{d,1}$ when $d>1$ is odd. Then the Gegenbauer parameter $\lambda = (d-2)/2 = \kappa +\half$, for some nonnegative integer $\kappa$.

We will need the following technical lemmas.
\begin{Lemma} \label{lem:technical1}Let $\alpha$ be an integer and $\beta$ a nonnegative integer. If $\alpha$ is nonnegative further suppose $\beta < a$. Then
\begin{gather*}
J_{\beta,\alpha} =\sum_{\nu=0}^\beta \binom{\beta}{\nu} \frac{(-1)^\nu}{\alpha-\nu}=\frac{(-1)^\beta \beta!}{\alpha(\alpha-1) \cdots (\alpha-\beta)}.
\end{gather*}
\end{Lemma}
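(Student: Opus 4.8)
The plan is to prove the identity by induction on $\beta$, the engine being a first-order recurrence that lowers $\beta$ by one. For the base case $\beta=0$ both sides equal $1/\alpha$, which is well defined because the hypotheses force $\alpha\neq 0$ (if $\alpha\geq 0$, then $\beta<\alpha$ forces $\alpha\geq 1$). For the inductive step I would split each binomial coefficient using Pascal's rule $\binom{\beta}{\nu}=\binom{\beta-1}{\nu}+\binom{\beta-1}{\nu-1}$, break the resulting sum into two pieces, and reindex the second piece by $\mu=\nu-1$. Since $\binom{\beta-1}{\beta}=0$, the first piece is exactly $J_{\beta-1,\alpha}$; the reindexed second piece carries an extra sign $(-1)^{\mu+1}$ and a shifted denominator $(\alpha-1)-\mu$, so it equals $-J_{\beta-1,\alpha-1}$. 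This yields the clean recurrence
\begin{gather*}
J_{\beta,\alpha}=J_{\beta-1,\alpha}-J_{\beta-1,\alpha-1}.
\end{gather*}

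Then I would apply the induction hypothesis to both terms, factor out the common product $\bigl[(\alpha-1)\cdots(\alpha-\beta+1)\bigr]^{-1}(-1)^{\beta-1}(\beta-1)!$, and combine the two remaining simple fractions via $\frac{1}{\alpha}-\frac{1}{\alpha-\beta}=\frac{-\beta}{\alpha(\alpha-\beta)}$. The factor $-\beta$ promotes $(\beta-1)!$ to $\beta!$ and flips $(-1)^{\beta-1}$ to $(-1)^\beta$, while the two new denominator factors $\alpha$ and $\alpha-\beta$ complete the product to $\alpha(\alpha-1)\cdots(\alpha-\beta)$, which is precisely the claimed right-hand side.

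This identity is elementary, so there is no deep obstacle; the only point requiring genuine care is the bookkeeping of the hypotheses, ensuring that the induction hypothesis legitimately applies to both $J_{\beta-1,\alpha}$ and $J_{\beta-1,\alpha-1}$ and that no denominator vanishes. For $J_{\beta-1,\alpha}$ the condition $\beta<\alpha$ (when $\alpha\geq 0$) gives $\beta-1<\alpha$ at once; for $J_{\beta-1,\alpha-1}$ one checks that whenever $\alpha-1\geq 0$ the same hypothesis yields $\beta-1<\alpha-1$, and when $\alpha-1<0$ no positivity condition is required. In every admissible case the smallest denominator occurring is $\alpha-\beta$, strictly positive when $\alpha\geq 0$ and strictly negative when $\alpha<0$, so all expressions stay well defined throughout the induction.

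An alternative worth mentioning is a direct partial-fraction argument. Decomposing the rational function $\beta!\big/\bigl[x(x-1)\cdots(x-\beta)\bigr]$ produces the residue $(-1)^{\beta-\nu}\binom{\beta}{\nu}$ at $x=\nu$, so that
\begin{gather*}
\frac{(-1)^\beta\beta!}{x(x-1)\cdots(x-\beta)}=\sum_{\nu=0}^\beta\binom{\beta}{\nu}\frac{(-1)^\nu}{x-\nu},
\end{gather*}
and evaluating at $x=\alpha$ gives the identity immediately. This route is slicker and delivers all admissible $\alpha$ at once, but it still rests on the same nonvanishing observation; since the inductive proof is more elementary and self-contained, I would present that as the main argument.
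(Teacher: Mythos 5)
Your proposal is correct and follows essentially the same route as the paper: induction on $\beta$ via Pascal's rule to obtain the recurrence $J_{\beta,\alpha}=J_{\beta-1,\alpha}-J_{\beta-1,\alpha-1}$, then a double application of the induction hypothesis and combination of the two fractions. Your extra bookkeeping of the hypotheses (and the partial-fraction aside) goes slightly beyond what the paper records, but the core argument is identical.
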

\begin{proof}
The identity will be proven by induction on $\beta$. For $\beta=0$ the result is immediate. Now assume that the identity is true for $\beta=\kappa-1$ where $\kappa \in \N$. Then
\begin{gather*}
J_{\kappa,a} = \binom{\kappa}{0}\frac{1}{\alpha} +\binom{\kappa}{\kappa} \frac{(-1)^\kappa}{\alpha-\kappa}+ \sum_{\nu=1}^{\kappa-1}\left\{ \binom{\kappa-1}{\nu}+\binom{\kappa-1}{\nu-1}\right\} \frac{(-1)^\nu}{\alpha-\nu}\\
\hphantom{J_{\kappa,a}}{} = \sum_{\nu=0}^{\kappa-1} \binom{\kappa-1}{\nu} \frac{(-1)^\nu}{\alpha-\nu} + \left\{ \frac{(-1)^\kappa}{\alpha-\kappa}+ \sum_{\ell=0}^{\kappa-2} \binom{\kappa-1}{\ell}
	 \frac{(-1)^{\ell+1}}{\alpha-1-\ell} \right\}\\
\hphantom{J_{\kappa,a}}{} = \sum_{\nu=0}^{\kappa-1} \binom{\kappa-1}{\nu} \frac{(-1)^\nu}{\alpha-\nu} - \sum_{\ell=0}^{\kappa-1} \binom{\kappa-1}{\ell} \frac{ (-1)^\ell }{\alpha-1-\ell} .
\end{gather*}
Applying the induction hypothesis twice
\begin{gather*}
J_{\kappa,\alpha} = \frac{(-1)^{\kappa-1} (\kappa-1)!}{\alpha(\alpha-1) \cdots (\alpha-\kappa+1)}
 - \frac{ (-1)^{\kappa-1} (\kappa-1)!}{(\alpha-1)(\alpha-2)\cdots (\alpha-\kappa )}=\frac{(-1)^\kappa \kappa! }{\alpha (\alpha-1) \cdots (\alpha-\kappa)},
\end{gather*}
showing that the identity also holds for $\beta=\kappa$.
\end{proof}

\begin{Lemma} \label{lem:technical2}Let $\kappa\in \N_0$. Then
\begin{gather*}
\int_{-1}^1 \left(1-x^2\right)^\kappa \ln \left( \frac{1-x}{2} \right) {\rm d}x = 2^{2\kappa+1}(-1)^{\kappa+1} \sum_{\nu=0}^\kappa \binom{\kappa}{\nu} \frac{(-1)^\nu}{(2\kappa-\nu+1)^2}.
\end{gather*}
\end{Lemma}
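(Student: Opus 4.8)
The plan is to reduce the integral to a standard elementary form by a linear change of variable matched to the argument of the logarithm, then to expand and integrate term by term.

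First I would substitute $u = \frac{1-x}{2}$, so that $x = 1 - 2u$, ${\rm d}x = -2\,{\rm d}u$, and the endpoints $x=-1,1$ become $u = 1, 0$. Since $1-x = 2u$ and $1+x = 2(1-u)$, we have $\big(1-x^2\big)^\kappa = 2^{2\kappa} u^\kappa (1-u)^\kappa$ and $\ln\big(\frac{1-x}{2}\big) = \ln u$. After flipping the limits of integration the identity to be proven becomes
\begin{gather*}
\int_{-1}^1 \big(1-x^2\big)^\kappa \ln\left(\frac{1-x}{2}\right){\rm d}x = 2^{2\kappa+1}\int_0^1 u^\kappa (1-u)^\kappa \ln u\,{\rm d}u.
\end{gather*}

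Next I would expand $(1-u)^\kappa = \sum_{\nu=0}^\kappa \binom{\kappa}{\nu}(-1)^\nu u^\nu$ and integrate term by term, using the elementary identity $\int_0^1 u^s \ln u\,{\rm d}u = -\frac{1}{(s+1)^2}$, valid for $s \geq 0$ and obtained either by differentiating $\int_0^1 u^s\,{\rm d}u = \frac{1}{s+1}$ with respect to $s$, or by a single integration by parts. With $s = \kappa+\nu$ this gives
\begin{gather*}
\int_0^1 u^\kappa (1-u)^\kappa \ln u\,{\rm d}u = -\sum_{\nu=0}^\kappa \binom{\kappa}{\nu}\frac{(-1)^\nu}{(\kappa+\nu+1)^2}.
\end{gather*}

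Finally I would reconcile this with the stated right-hand side through the index reversal $\nu \mapsto \kappa-\nu$, which leaves $\binom{\kappa}{\nu}$ invariant, replaces $(-1)^\nu$ by $(-1)^\kappa (-1)^\nu$, and turns $\kappa+\nu+1$ into $2\kappa-\nu+1$. Collecting the resulting factor $(-1)^\kappa$ together with the leading minus sign yields $(-1)^{\kappa+1}$, and restoring the prefactor $2^{2\kappa+1}$ produces exactly the claimed expression. I expect no genuine obstacle; the only points needing a word of care are justifying the logarithmic integral as a convergent improper integral at $u=0$ (the singularity of $\ln u$ is integrable against the positive power $u^{\kappa+\nu}$) and keeping the sign bookkeeping in the reindexing straight.
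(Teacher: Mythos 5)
Your proof is correct and is essentially the paper's argument: both reduce the integral, via a binomial expansion of the factor complementary to $(1-x)^\kappa$, to the elementary fact $\int_0^1 t^s\ln t\,{\rm d}t=-1/(s+1)^2$. The only cosmetic difference is that you substitute $u=\frac{1-x}{2}$ before expanding, which forces the final reindexing $\nu\mapsto\kappa-\nu$, whereas the paper expands $(1+x)^\kappa$ in powers of $(1-x)$ first and lands directly on the exponents $2\kappa-\nu$.
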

\begin{proof}
\begin{gather}
I = \int_{-1}^1 \left(1-x^2\right)^\kappa \ln \left( \frac{1-x}{2} \right) {\rm d}x = \sum_{\nu=0}^\kappa \binom{\kappa}{\nu} 2^\nu (-1)^\nu \int_{-1}^1 \ln\left( \frac{1-x}{2} \right) (1-x)^{2\kappa-\nu} {\rm d}x
 \nonumber \\
\hphantom{I} =\sum_{\nu=0}^\kappa \binom{\kappa}{\nu} 2^{2\kappa} (-1)^\nu \int_{-1}^1 \ln\left( \frac{1-x}{2} \right) \left( \frac{1-x}{2} \right)^{2\kappa-\nu} {\rm d}x. \label{eq:tech_lem2_I}
\end{gather}
Now
\begin{gather*}
\int_{-1}^1 \ln \left( \frac{1-x}{2} \right) \left( \frac{1-x}{2} \right)^{2\kappa-\nu} {\rm d}x = 2 \int_{0}^1 \ln( t) t^{2\kappa-\nu} {\rm d}t= \frac{ -2}{(2\kappa-\nu+1)^2}.
\end{gather*}
Substituting into equation~\eqref{eq:tech_lem2_I} yields the result.
\end{proof}

We now turn to the development of an expression for $G_\lambda$ which will be particularly convenient for the evaluation of the indefinite integral $S_\lambda (y)$ of equation~\eqref{eq:Slambda} in this $\lambda=\kappa+\half$ case. Recall the definition \eqref{eq:Glambda_def} of $G_\lambda^\alpha$. In this section we restrict ourselves to the case where $\alpha$ is a nonnegative integer, with $\alpha \leq \kappa$.
Substituting $1-z=2-(1+z)$ into the expression for $G_\lambda^\alpha$ yields
\begin{gather}
 G_\lambda^\alpha (y) = \int_{-1}^y \big(1-z^2\big)^\kappa (1-z)^{-\alpha} {\rm d}z \nonumber\\
\hphantom{G_\lambda^\alpha (y)}{} = \int_{-1}^y (1+z)^\kappa \sum_{\gamma=0}^{\kappa-\alpha} \binom{\kappa-\alpha}{\gamma} 2^\gamma (-1)^{\kappa-\alpha-\gamma} (1+z)^{\kappa-\alpha-\gamma} {\rm d}z \nonumber\\
\hphantom{G_\lambda^\alpha (y)}{}= \sum_{\gamma=0}^{\kappa-\alpha} \binom{\kappa-\alpha}{\gamma} \frac{2^\gamma (-1)^{\kappa-\alpha-\gamma}}{2\kappa-\alpha -\gamma+1} (1+y)^{2\kappa-\alpha-\gamma+1}.
\label{special_form_of_G_lambda_alpha}
\end{gather}
In particular, for $\alpha=0$,
\begin{gather*}
G_\lambda (y) = (1+y)^{\kappa+1} \widetilde{G}_\kappa (y),
\end{gather*}
where
\begin{gather*}
\widetilde{G}_\lambda (y) = \sum_{\gamma=0}^\kappa \binom{\kappa}{\gamma} \frac{2^\gamma (-1)^{\kappa-\gamma}}{2\kappa-\gamma+1} (1+y)^{\kappa-\gamma}\\
\hphantom{\widetilde{G}_\lambda (y)}{} = \sum_{\gamma=0}^\kappa \binom{\kappa}{\gamma} \frac{2^\gamma (-1)^{\kappa-\gamma}}{2\kappa-\gamma+1}
 \sum_{\ell=0}^{\kappa-\gamma} \binom{\kappa-\gamma}{\ell} 2^\ell (-1)^{\kappa-\gamma-\ell} (1-y)^{\kappa-\gamma-\ell}.
\end{gather*}
Now substituting $\nu=\gamma+\ell$ and noting that $ \binom{\kappa}{\gamma} \binom{\kappa-\gamma}{\nu-\gamma}=\binom{\kappa}{\nu}\binom{\nu}{\gamma}$
\begin{gather*}
\widetilde{G}_\lambda (y) = \sum_{\gamma=0}^\kappa \sum_{\nu=\gamma}^\kappa 	\binom{\kappa}{\gamma}\binom{\kappa-\gamma}{\nu-\gamma} \frac{2^\nu (-1)^{\nu-\gamma} }{2\kappa-\gamma+1} (1-y)^{\kappa-\nu} \\
\hphantom{\widetilde{G}_\lambda (y)}{} = \sum_{\nu=0}^\kappa 2^\nu \binom{\kappa}{\nu} \left\{
 \sum_{\gamma=0}^\nu \binom{\nu}{\gamma} (-1)^{\nu-\gamma} \frac{1}{2\kappa-\gamma+1} \right\} (1-y)^{\kappa-\nu}.
\end{gather*}
Applying Lemma~\ref{lem:technical1} finally gives
\begin{gather*}
\widetilde{G}_\lambda (y)= \sum_{\nu=0}^\kappa 2^\nu \binom{\kappa}{\nu} \frac{ \nu! (2\kappa-\nu)!}{(2\kappa+1)!} (1-y)^{\kappa-\nu}.
\end{gather*}
Therefore,
\begin{gather*} G_\lambda (y) = (1+y)^{\kappa+1} \sum_{\nu=0}^\kappa 2^\nu \binom{\kappa}{\nu}\frac{ \nu! (2\kappa-\nu)!}{(2\kappa+1)!} (1-y)^{\kappa-\nu},
\end{gather*}
and hence a representative value of $S_\lambda$ is
\begin{gather*}
S_\lambda(y) = \int \big(1-y^2\big)^{-\kappa-1} G_\lambda (y) {\rm d}y = \sum_{\nu=0}^\kappa 2^\nu \binom{\kappa}{\nu}
\frac{ \nu! (2\kappa-\nu)!}{(2\kappa+1)!}\int (1-y)^{-1-\nu } {\rm d}y \\
\hphantom{S_\lambda(y)}{}= g^\lambda_0 \ln \left(\frac{1-y}{2}\right) + \sum_{\nu=1}^\kappa g^\lambda_\nu(1-y)^{-\nu},
 \end{gather*}
 where
 \begin{gather}\label{eq:def_g_lambda_nu}
 g^\lambda_\nu = \begin{cases} \dfrac{-1}{2\kappa+1}, &\text{when}\ \nu=0,\\
 \dfrac{1}{2\kappa+1},& \text{when}\ \nu=1,\\
 \displaystyle 2^\nu \binom{\kappa}{\nu}
\frac{ (\nu-1)! (2\kappa-\nu)!}{(2\kappa+1)!},& \text{when}\ 1<\nu\leq \kappa.
\end{cases}
\end{gather}
To complete the calculation of $k_{d,1}$ we need to evaluate the constant
\begin{gather*}
[ S_\lambda , e_0]_\lambda = \int_{-1}^1 \big(1-x^2\big)^\kappa g^\lambda_0 \ln\left(\frac{1-x}{2}\right) {\rm d}y
+ \int_{-1}^1 \big(1-x^2\big)^\kappa \sum_{\nu=1}^\kappa g^\lambda_\nu(1-x)^{-\nu} {\rm d}x = I_1 +I_2.
\end{gather*}
The integral $I_1$ is given in Lemma~\ref{lem:technical2}. To compute $I_2$ we apply equation~\eqref{special_form_of_G_lambda_alpha} which implies that for $0 \leq \nu \leq \kappa$
\begin{gather*}
 \int_{-1}^1 \big(1-x^2\big)^\kappa (1-x)^{-\nu} {\rm d}x =G^\nu_\lambda(1) = \sum_{\gamma=0}^{\kappa-\nu} \frac{\binom{\kappa-\nu}{\gamma} 2^\gamma (-1)^{\kappa-\nu-\gamma} 2^{2\kappa-\nu-\gamma+1}}{2\kappa-\nu-\gamma+1} \\
\hphantom{\int_{-1}^1 \big(1-x^2\big)^\kappa (1-x)^{-\nu} {\rm d}x}{} = 2^{2\kappa-\nu+1} (-1)^{\kappa-\nu} \sum_{\gamma=0}^{\kappa-\nu} \frac{ \binom{\kappa-\nu}{\gamma} (-1)^\gamma}{2\kappa-\nu -\gamma+1} .
 \end{gather*}
 An application of Lemma~\ref{lem:technical1} then shows
 \begin{gather*}
G^\nu_\lambda(1) = \frac{2^{2\kappa-\nu +1} \kappa! (\kappa-\nu)!}{(2\kappa-\nu +1)!}.
 \end{gather*}
 Putting all these things together, for $d=2\kappa+1 >1$ odd, and $\lambda=(d-2)/2$,
 \begin{gather*} 
 k_{2\kappa+1,1}(x) =\frac{-1}{2\kappa+1} \ln\left(\frac{1-x}{2}\right) + \sum_{\nu=1}^\kappa g^\lambda_\nu(1-x)^{-\nu} -D_\lambda,
 \end{gather*}
 where
 \begin{gather*}
 D_\lambda =\frac{ [S_\lambda,e_0]_\lambda}{[e_0,e_0]_\lambda} = \frac{1}{f_{\lambda-\half}} \left(\sum_{\nu=1}^{\kappa} g^\lambda_\nu
G_\lambda^\nu(1) +\frac{1}{2\kappa+1} 2^{2\kappa+1} (-1)^\kappa \sum_{\nu=0}^\kappa \frac{\binom{\kappa}{\nu} (-1)^\nu}{(2\kappa-\nu+1)^2} \right).
 \end{gather*}

\pdfbookmark[1]{References}{ref}
\LastPageEnding

\end{document}